\documentclass[12pt,letter]{article}
\usepackage{graphicx,colordvi,psfrag}
\usepackage{amsmath,amssymb}
\usepackage{amsthm}
\usepackage{epstopdf}
\usepackage{epsfig}
\usepackage{calc,pstricks, pgf, xcolor}
\usepackage{bbding,enumerate}
\usepackage{dsfont}
\usepackage{cite}

\setlength{\oddsidemargin}{0.2in}
\setlength{\topmargin}{0in} \setlength{\textheight}{8.5in}
\setlength{\textwidth}{6.2in}
\pagestyle{empty}


\title{An Upper Bound on the Sizes of Multiset-Union-Free Families}
\author{Or Ordentlich and Ofer Shayevitz \thanks{The authors are with the Department of Electrical Engineering - Systems at the Tel Aviv University, \{ordent,ofersha\}@eng.tau.ac.il. The work of O. Ordentlich was supported by the Admas Fellowship Program of the Israel Academy of Science and Humanities, a fellowship from The Yitzhak and Chaya Weinstein Research Institute for Signal Processing at Tel Aviv University, and the Feder Family Award. The work of O. Shayevitz was supported in part by the Marie Curie Career Integration Grant (CIG), Grant agreement no. 631983, and in part by the Israel Science Foundation under Grant No.  1367/14.}}

\def\wt{\widetilde}
\def\Expt{\mathbb{E}}
\def\b{\textbf}

\newcommand{\m}{\mathcal}

\newtheorem{lemma}{Lemma}
\newtheorem{proposition}{Proposition}
\newtheorem{theorem}{Theorem}
\newtheorem{remark}{Remark}
\newtheorem{corollary}{Corollary}

\begin{document}

\date{}
\maketitle

\thispagestyle{empty}
\begin{abstract}
Let $\m{F}_1$ and $\m{F}_2$ be two families of subsets of an $n$-element set. We say that $\m{F}_1$ and $\m{F}_2$ are multiset-union-free if for any $A,B\in \m{F}_1$ and $C,D\in \m{F}_2$ the multisets $A\uplus C$ and $B\uplus D$ are different, unless both $A = B$ and  $C= D$. We derive a new upper bound on the maximal sizes of multiset-union-free pairs, improving a result of Urbanke and Li.
\end{abstract}

\section{Introduction}\label{sec:intro}
Let $\m{F}_1$ and $\m{F}_2$ be two families of subsets of an $n$-element set. We say that $\m{F}_1$ and $\m{F}_2$ are \textit{multiset-union-free} if the multiset union of the families $\m{F}_1$ and $\m{F}_2$, defined as
\begin{align}
\m{F}_1\uplus\m{F}_2\triangleq\{F_1\uplus F_2 : F_1\in\m{F}_1,F_2\in\m{F}_2 \} \quad \text{with multiplicities}\nonumber
\end{align}
contains exactly $|\m{F}_1|\cdot|\m{F}_2|$ distinct elements. It would sometimes be instructive to represent $\m{F}_i$ by the corresponding set  $\m{C}_i$ of binary characteristic vectors; the multiset-union-free property is then equivalent to the requirement that $\b{a}+\b{c}\neq \b{b}+\b{d}$ for any vectors $\b{a},\b{b}\in \m{C}_1$ and $\b{c},\b{d}\in \m{C}_2$ unless both $\b{a}=\b{b}$ and $\b{c}=\b{d}$, where addition is over the reals. We say that a pair $0\leq R_1\leq 1$, $0\leq R_2\leq 1$ is admissible if there exists a sequence of multiset-union-free pairs $\m{F}_1$ and $\m{F}_2$ with cardinalities $|\m{F}_1| = 2^{n(R_1+o(1))}$ and $|\m{F}_2| = 2^{n(R_2+o(1))}$. Our goal is to find necessary conditions for a pair $(R_1,R_2)$ to be admissible. The set of all admissible $(R_1,R_2)$ has been extensively studied in the information theory literature; it is often referred to as the zero-error capacity region of the binary adder channel~\cite{Lindstrom69,Tilborg78,kl78,Weldon78,klwy83,bt85,bb98,UL98,ab99,mo05}.

Clearly, $R_1+R_2\leq \log{3} \approx 1.5849$ must hold, where logarithms are taken in base $2$. This bound can be easily improved via standard information theoretic arguments. Recall that the entropy of a random variable $X$ with a probability distribution $P=(p_1,\ldots,p_K)$ is
\begin{align*}
H(X) = -\sum_{k}p_k\log p_k.
\end{align*}
When convenient, we also denote the entropy of $X$ above by $H(P)$. Assume $\m{F}_1,\m{F}_2$ are multiset-union-free families with cardinalities $2^{nR_1}$ and $2^{nR_2}$ respectively. Let ${\bf X}_1, {\bf X}_2$ be two characteristic vectors pertaining to subsets in $\m{F}_1,\m{F}_2$ respectively chosen uniformly at random, independently of each other. The real sum ${\bf X}_1+ {\bf X}_2$ is hence uniformly distributed over all  $|\m{F}_1|\cdot |\m{F}_2| = 2^{n(R_1+R_2)}$ possible sums. By the subadditivity of entropy \cite{Cover}
\begin{align*}
  n(R_1+R_2) = H({\bf X}_1 + {\bf X}_2) \leq \sum_{k=1}^n H(X_{1,k}+X_{2,k}) \leq n\cdot \max_{P_{X_1},P_{X_2}} H(X_1+X_2)
\end{align*}
where the maximization is over all independent binary random variables $X_1,X_2$. The maximum is attained for uniform $P_{X_1}$ and $P_{X_2}$, which yields the bound $R_1+R_2 \leq \tfrac{3}{2}$.

Write $h(p) = H(p, 1-p)$ for the binary entropy, and $h^{-1}(x)$ for its inverse restricted to $[0,\tfrac{1}{2}]$. To date, the only improvement over the simple bound above was given by Urbanke and Li:
\begin{theorem}[Urbanke and Li~\cite{UL98}]
Any admissible $(R_1,R_2)$ satisfies
\begin{align*}
R_1+R_2\leq\min_{ 0\leq \rho\leq \tfrac{1}{2}}\max_{0\leq \kappa\leq 1}h\left(\langle 1-h^{-1}(R_1)-\kappa \rangle\right)-h(\rho)+\min\left\{g^*(\rho),\langle \rho+\kappa \rangle+h(\langle \rho+\kappa \rangle) \right\}
\end{align*}
where $\langle a\rangle\triangleq\min(a,1/2)$, and
\begin{align}
g^*(\rho)=\max_{0\leq\beta\leq 1}h\left(((1-\rho)(1-\beta), \;\rho(1-\beta)+(1-\rho)\beta, \;\rho\beta) \right).\nonumber
\end{align}
\label{thm:ul98}
\end{theorem}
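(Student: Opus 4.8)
The plan is an entropy/counting argument: after reducing to constant-weight families I will bound the output entropy $H(\b X_1+\b X_2)$ using the multiset-union-free structure more carefully than the plain subadditivity bound does. \textbf{Reductions.} A family of subsets of $[n]$ has only $n+1$ possible sizes, so pigeonholing $\m C_1$ on Hamming weight and keeping the largest class loses only a factor $n+1$ — hence does not change $R_1$ — and the subfamily stays multiset-union-free; so assume every codeword of $\m C_1$ has weight $w_1n$. The constant-weight bound $2^{nR_1}\le(n+1)\binom{n}{w_1n}$ gives $h(w_1)\ge R_1-o(1)$, and replacing $\m C_1$ by its bitwise complement preserves the property (since $\bar{\b a}+\b c=\bar{\b b}+\b d\Leftrightarrow\b a+\b d=\b b+\b c$), so I may take $w_1\le\tfrac12$, i.e.\ $w_1\ge h^{-1}(R_1)$. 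The right-hand side of the claimed inequality is non-increasing in $w_1$ (only the summand $h(\langle 1-w_1-\kappa\rangle)$ depends on $w_1$), so it suffices to prove the bound with $w_1$ in place of $h^{-1}(R_1)$. Finally take $\b X_1,\b X_2$ uniform and independent over the characteristic vectors of $\m C_1,\m C_2$ and set $\b S=\b X_1+\b X_2$; the property forces $\b S$ to determine $(\b X_1,\b X_2)$, whence $n(R_1+R_2)=H(\b X_1)+H(\b X_2)=H(\b S)$, and everything reduces to upper-bounding $H(\b S)$.

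\textbf{Structure of the bound.} Because $\b S\leftrightarrow(\b X_1,\b X_2)$, a genie revealing bits of a single codeword only reproduces the identity $H(\b S)=nR_1+nR_2$ and is useless; the improvement over the subadditivity bound $R_1+R_2\le\tfrac32$ — which is what the claimed expression evaluates to at $\rho=\tfrac12$ — must come from using $\m C_2$'s structure relative to $\m C_1$. I would: (i) condition $\b S$ on the joint empirical type of $(\b X_1,\b X_2)$, equivalently (with $\b X_1$'s weight fixed) on the support overlap; this costs $O(\log n)$ bits, giving $H(\b S)\le o(n)+\max_\kappa H(\b S\mid\text{type})$ and hence the outer $\max_\kappa$, where $\kappa$ parametrizes the unknown composition/overlap so one must take the worst case. (ii) Decompose $\b S$ (given the type) into its zero-pattern — the indicator of $\{i:S_i=0\}$, worth at most $n\,h(\langle 1-w_1-\kappa\rangle)$ bits by a binomial count, the first summand — and the $\{1,2\}$-valued word it leaves on the remaining coordinates. (iii) Introduce the free parameter $\rho$ as the bias of an optimal product-measure relaxation on $\b X_1$'s support (a type-to-i.i.d.\ change of measure costing an $h(\rho)$ per-letter correction, the $-h(\rho)$ term; optimizing $\rho$ gives the outer $\min_\rho$) and bound the entropy of the residual word in two competing ways, keeping the smaller — the inner $\min$.

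\textbf{The two competing estimates and assembly.} The first estimate ignores all constraints on $\b X_2$ except its (optimally chosen) conditional bias $\beta$, so the residual per-letter entropy is at most the largest adder-output entropy with one $\mathrm{Bern}(\rho)$ input, namely $g^*(\rho)=\max_\beta h\big((1-\rho)(1-\beta),\,\rho(1-\beta)+(1-\rho)\beta,\,\rho\beta\big)$ by definition. The second estimate does use the multiset-union-free property — in its difference-set form $(\m C_1-\m C_1)\cap(\m C_2-\m C_2)=\{\b 0\}$ — to bound the number of $\b c\in\m C_2$ compatible with a fixed $\b a\in\m C_1$ and a prescribed overlap by strictly less than the naive binomial; the resulting count is that of a ternary symbol that is unambiguous except with probability $\langle\rho+\kappa\rangle$, whose entropy is $\langle\rho+\kappa\rangle+h(\langle\rho+\kappa\rangle)$. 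Summing the zero-pattern and residual contributions, substituting $w_1=h^{-1}(R_1)$ and using monotonicity in $w_1$, yields the stated inequality.

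\textbf{Main obstacle.} Step (iii) is the crux: any bound that only estimates pieces of $\b S$ by logs of binomial or multinomial coefficients telescopes back to $\tfrac32$, so the whole improvement hinges on isolating and correctly charging a genuinely non-telescoping, ``local'' packing consequence of the multiset-union-free condition — how few codewords of $\m C_2$ can agree with a given codeword of $\m C_1$ on a prescribed overlap — and on recognizing that it is exactly the quantity $\langle\rho+\kappa\rangle+h(\langle\rho+\kappa\rangle)$ that trades off against $g^*(\rho)$. The constant-weight reductions and the nested optimization over $\beta,\rho,\kappa$ are then routine but lengthy.
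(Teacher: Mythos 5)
The paper does not actually prove this statement: Theorem \ref{thm:ul98} is quoted from Urbanke and Li \cite{UL98} and used only as a benchmark, so there is no in-paper proof to compare yours against. Your proposal therefore has to stand on its own, and as written it does not.

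Your first paragraph (pigeonholing to constant weight $w_1$, complementation to force $w_1\leq\tfrac12$, the bound $w_1\geq h^{-1}(R_1)-o(1)$, monotonicity of the right-hand side in $w_1$, and the identity $n(R_1+R_2)=H(\b{S})$) is correct and routine, and your sanity check that the expression collapses to $\tfrac32$ at $\rho=\tfrac12$ is right. Everything after that is a plan rather than a proof. The decomposition in (i)--(ii) is plausible and the zero-pattern term $h(\langle 1-w_1-\kappa\rangle)$ is a believable binomial count, but step (iii) and the ``two competing estimates'' are where all of the content lives, and there you only name the quantities $-h(\rho)$, $g^*(\rho)$ and $\langle\rho+\kappa\rangle+h(\langle\rho+\kappa\rangle)$ without deriving them. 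Specifically: (a) you never say what $\rho$ parametrizes concretely, nor why the ``type-to-i.i.d.\ change of measure'' costs exactly $h(\rho)$ per letter and why that cost appears with a minus sign in an upper bound; (b) the assertion that the difference-set condition $(\m{C}_1-\m{C}_1)\cap(\m{C}_2-\m{C}_2)=\{\b{0}\}$ limits the number of $\b{c}\in\m{C}_2$ compatible with a fixed $\b{a}$ and overlap to a count with per-letter entropy $\langle\rho+\kappa\rangle+h(\langle\rho+\kappa\rangle)$ is precisely the nontrivial packing lemma on which the whole theorem rests, and you state it as a target rather than prove it --- your own ``Main obstacle'' paragraph concedes as much. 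Since, as you yourself observe, every step you have actually justified telescopes back to $R_1+R_2\leq\tfrac32$, the proposal has a genuine gap at its load-bearing step: it is a reasonable reverse-engineering of the shape of the Urbanke--Li bound, not a proof of it.
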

For the maximal value of $R_1=1$, this bound yields $R_2<0.49216$, which improves upon $R_2\leq 0.5$ given by the standard information theoretic bound.

For $0\leq p,q \leq 1$, write $p\star q \triangleq p(1-q) + q(1-p)$. Let
  \begin{align}\label{eq:LJ}
\nonumber L(\eta) &\triangleq h(\eta) + 1- \eta \\
    J(p,\eta) &\triangleq \left\{\begin{array}{cc}
                                                                                             2h\left(\frac{1}{2}\left(1-\sqrt{1-2\eta}\right)\right)-\eta & \eta\geq p\star p \\
                                                                                             2h\left(\frac{1}{2}\left(1-\frac{1-\eta-p\star p}{\sqrt{1-2 (p\star p)}}\right)\right)-\frac{1}{2}\left(1-\frac{(1-\eta-p\star p)^2}{1-2 (p\star p)} \right) & \eta<p\star p
                                                                                           \end{array}
 \right.
  \end{align}
and
\begin{align}\label{eq:Rsum}
R_\Sigma(r_0,r_1) \triangleq \max_{h^{-1}(r_1)\leq \eta\leq \frac{1}{2}} \min\{L(\eta),\, J(h^{-1}(r_1),\eta)+r_0\}
\end{align}

Our main result is the following.
\begin{theorem}\label{thm:main}
Any admissible $(R_1,R_2)$ satisfies
\begin{align*}
  R_2 < \min_{0\leq \alpha \leq h^{-1}(R_1)} (1-\alpha) \left(R_\Sigma\left(\frac{\alpha}{1-\alpha},\,\Gamma(R_1,\alpha)\right) - \Gamma(R_1,\alpha)\right)
\end{align*}
where
\begin{align*}
  \Gamma(R_1,\alpha) \triangleq h\left(\frac{h^{-1}(R_1)-\alpha}{1-\alpha}\right)
\end{align*}
\end{theorem}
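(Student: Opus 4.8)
My plan is to bound $|\m{C}_2|$ for a multiset-union-free pair $(\m{C}_1,\m{C}_2)$ with $|\m{C}_1|=2^{nR_1}$, working with the binary characteristic vectors; recall that the defining property is equivalent to $(\m{C}_1-\m{C}_1)\cap(\m{C}_2-\m{C}_2)=\{\mathbf 0\}$ over the reals.

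\textbf{Step 1 (reduction to constant weight).} One of the $n+1$ weight classes of $\m{C}_1$ contains a $1/(n+1)$ fraction of it, and complementing all coordinates in one of the two families leaves the difference set — hence multiset-union-freeness — unchanged. Since the single-letter argument from the introduction forces the average relative weight of $\m{C}_1$ to be at least $h^{-1}(R_1)$, I may assume (after discarding lower-order terms; a short monotonicity check in the final bound handles the case where the common weight exceeds $h^{-1}(R_1)$) that every codeword of $\m{C}_1$ has relative weight $w:=h^{-1}(R_1)$.

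\textbf{Step 2 (coordinate fixing).} Fix $\alpha\in[0,w]$. Averaging over all $\alpha n$-element coordinate subsets, there is a set $A$ with $|A|=\alpha n$ for which $\m{C}_1':=\{\b a\in\m{C}_1:A\subseteq\mathrm{supp}(\b a)\}$ satisfies $\log_2|\m{C}_1'|\ge (1-\alpha)n\,\Gamma(R_1,\alpha)-o(n)$, by the identity $\binom{n}{wn}\binom{wn}{\alpha n}=\binom{n}{\alpha n}\binom{(1-\alpha)n}{(w-\alpha)n}$. Every codeword of $\m{C}_1'$ is all-ones on $A$, so $\m{C}_1'|_{A^c}$ is a constant-weight code on the $(1-\alpha)n$ coordinates of $A^c$, of relative weight $h^{-1}(\Gamma(R_1,\alpha))$ and rate $\Gamma(R_1,\alpha)$, on which the restriction map is injective. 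Splitting $\m{C}_2$ over $\m{C}_2|_A$ (of size at most $2^{\alpha n}$) into fibers $\m{C}_{2,\b z}:=\{\b w:(\b z,\b w)\in\m{C}_2\}$, each pair $(\m{C}_1'|_{A^c},\m{C}_{2,\b z})$ is multiset-union-free on $A^c$ — a coincidence lifts to one in $(\m{C}_1',\m{C}_2)$ upon appending $\mathbf 1_A$ and $\b z$ — and distinct $\b z$ yield distinct sums on $A$, so the $|\m{C}_2|_A|$ sub-configurations do not interact.

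\textbf{Step 3 (the core bound).} The crux is an upper bound fitted to this situation: for a multiset-union-free system on $m$ coordinates built from a rate-$r_1$ constant-weight family (relative weight $p=h^{-1}(r_1)$) together with a second family split into at most $2^{r_0 m}$ such fibers, one has $\log_2(\text{first size})+\log_2(\text{second size})\le m\,R_\Sigma(r_0,r_1)$. I expect this to be proved by a case analysis balancing two estimates for the number of realizable ternary sums $\b x_1+\b x_2\in\{0,1,2\}^m$: an entropy-type count $L(\eta)$ in terms of the relative number $\eta$ of ambiguous ($=1$) coordinates — i.e. the relative Hamming distance between the two summands — and a code-theoretic Plotkin/second-moment estimate $J(p,\eta)+r_0$ exploiting that the first family has constant weight $p$, the two branches of $J$ (at $\eta\gtrless p\star p$) distinguishing whether the relevant pairwise distances lie above or below the generic distance $2p(1-p)$ of weight-$p$ vectors, and the additive $r_0$ paying for the fiber label; optimizing the adversary's choice of $\eta$ and taking the worse of the two bounds gives $R_\Sigma(r_0,r_1)$. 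I regard this step — in particular isolating the correct Plotkin-type estimate, correctly tracking the mutual constraints among $\eta$, the weight of the agreeing part of the codewords, and the fiber index, and verifying that it is precisely the two-sided minimum over $\eta$ that controls the extremum — as the main obstacle, and as the place where the improvement over Urbanke--Li actually resides.

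\textbf{Step 4 (reassembly).} Applying Step 3 with $m=(1-\alpha)n$, $r_1=\Gamma(R_1,\alpha)$, $r_0=\tfrac{\alpha}{1-\alpha}$, and subtracting $\log_2|\m{C}_1'|\ge(1-\alpha)n\,\Gamma(R_1,\alpha)-o(n)$, gives $\log_2|\m{C}_2|\le (1-\alpha)n\big(R_\Sigma(\tfrac{\alpha}{1-\alpha},\Gamma(R_1,\alpha))-\Gamma(R_1,\alpha)\big)+o(n)$. Dividing by $n$, minimizing over $\alpha\in[0,h^{-1}(R_1)]$, and passing to the limit along the sequence in the definition of admissibility yields the theorem, with the strict inequality absorbing the $o(1)$ slack from the expurgations and typicality estimates.
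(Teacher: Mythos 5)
Your Steps 1--2 produce a clean analogue of the paper's shattering step for a \emph{single} pattern (the all-ones pattern on $A$), and the exponent $(1-\alpha)\Gamma(R_1,\alpha)$ you extract is exactly the paper's $\beta$ from Corollary \ref{cor:soft_sauer}. The gap is in how you assemble the fibers. You pair the one family $\m{C}_1'|_{A^c}$ with every fiber $\m{C}_{2,\b{z}}$ and claim the sub-configurations ``do not interact'' because the sums differ on $A$. That is true of the full $n$-coordinate sums, but it is not what Step 3 needs. The $L(\eta)$ branch of $R_\Sigma$ comes from the constraint $r_0+r_1+r_2\leq H(X_1+X_2)$, which requires the $2^{mr_0}$ sum-multisets to be \emph{mutually disjoint on the $m=(1-\alpha)n$ surviving coordinates}, i.e.\ the number of distinct sums restricted to $A^c$ must be $M_0M_1M_2$. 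In your construction the sums from different fibers are separated only by their $A$-part $\mathbf{1}_A+\b{z}$, so nothing prevents massive collisions on $A^c$ across fibers. Losing that constraint leaves only the conditional bound $r_1+r_2\leq J(h^{-1}(r_1),\eta)$, and $\max_\eta J(p,\eta)=\tfrac{3}{2}$ (at $\eta=\tfrac12$), so the $\min$ with $L(\eta)$ --- which is precisely what makes $R_\Sigma$ nontrivial --- evaporates and the argument yields nothing beyond the trivial bound.

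The paper's fix is to make the sum on the fixed coordinates \emph{constant} across the whole system: the fiber of $\m{F}_2$ projecting to $G\subseteq S$ is paired with the fiber of $\m{F}_1$ projecting to $S\setminus G$, so every pair is an $S$-complement pair, the sum on $S$ is identically $\mathbf{1}_S$, and distinctness of the full sums transfers verbatim to $\overline{S}$. But this requires \emph{every} pattern on $S$ to carry a fiber of $\m{F}_1$ of size $2^{n\beta}$ simultaneously, since you cannot control on which patterns $\m{C}_2|_A$ concentrates; that is exactly the $2^{n\beta}$-shattering guaranteed by the soft Sauer--Perles--Shelah lemma (Lemma \ref{lem:soft_sauer}), which your single-popular-pattern averaging does not supply. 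Separately, your Step 3 is the paper's Lemma \ref{lem:sw}; it is proved there not by a Plotkin-type distance count but by an entropy argument (a Slepian--Wolf-type outer bound, a reduction to symmetric joint distributions, and an extension of Wyner's common-information bound), and since you defer it entirely, roughly half of the actual proof is missing from the proposal even before the issue above.
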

For the maximal value of $R_1=1$, this bound yields $R_2<0.4798$, which improves upon Theorem \ref{thm:ul98}. Figure \ref{fig1} depicts the three bounds for values of $R_1$ close to $1$.

The question of whether $R_1+R_2 = \tfrac{3}{2}$ is admissible for some $(R_1,R_2)$ remains wide open. We also note that there is a large gap between our bound and the best known constructions. For $R_1=1$, only $R_2=\tfrac{1}{4}$ is known to be admissible \cite{klwy83}, and the best known construction for the sum \cite{mo05} yields $R_1+R_2\approx 1.31781$.

\begin{figure}[htbp]\label{fig1}
  \centering
      \includegraphics[width=0.75\textwidth]{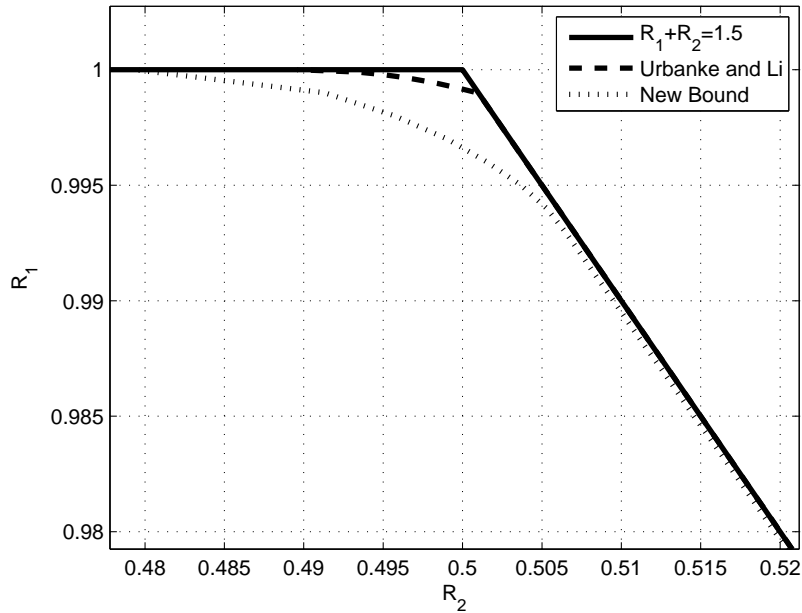}
  \caption{Illustration of the three bounds.}
\end{figure}

\section{Proof of Theorem \ref{thm:main}}

To avoid cumbersome notations, and since admissibility is an asymptotic property, we can assume without loss of generality that $nR_1$ and $nR_2$ (and all similar quantities) are integers.

\subsection{Motivation}
Let $\m{F}$ be a family of subsets of $[n] \triangleq\{1,\ldots,n\}$, and $S\subseteq [n]$. We say that $S$ is \textit{shattered} by $\m{F}$ \cite{AlonSpencer}, if the \textit{projection multiset} (or simply \textit{projection})
\begin{align*}
P_S^+(\m{F}) \triangleq \left\{F\cap S: F\in\m{F} \right\} \qquad \text{with multiplicities}
\end{align*}
of $\m{F}$ on $S$ contains all subsets of $S$.\footnote{Taking the multiplicities into account in the definition of the projection is not necessary here, but will become important in the sequel.} A family $\m{F}$ is said to be \textit{systematic} if it is shattered by some $S\subseteq [n]$ of cardinality $ \log |\m{F}|$. Weldon proved the following \cite{Weldon78}.
\begin{theorem}[Weldon~\cite{Weldon78}]\label{thm:weldon}
  If $\m{F}_1$ is systematic and the pair $\m{F}_1,\m{F}_2$ is multiset-union-free, then $R_2 \leq (1-R_1)\log{3}$.
\end{theorem}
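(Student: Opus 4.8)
The plan is to convert the systematic structure of $\m{F}_1$ into an explicit injection from $\m{C}_2$ into $\{0,1,2\}^T$, where $S\subseteq[n]$ is a set of cardinality $\log|\m{F}_1|$ that shatters $\m{F}_1$, and $T\triangleq[n]\setminus S$. Once such an injection is produced we get $|\m{F}_2|\le 3^{|T|}$ at once; since $|T|=n-\log|\m{F}_1|$ and (after the WLOG integrality reduction) $|\m{F}_1|=2^{nR_1}$, $|\m{F}_2|=2^{nR_2}$, this reads $2^{nR_2}\le 3^{n(1-R_1)}$, i.e.\ $R_2\le(1-R_1)\log 3$.

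First I would record what ``systematic'' buys us. The projection multiset $P_S^+(\m{F}_1)$ has $|\m{F}_1|=2^{|S|}$ members counted with multiplicity, and by hypothesis it contains all $2^{|S|}$ subsets of $S$; hence each subset of $S$ arises as $F\cap S$ for \emph{exactly one} $F\in\m{F}_1$. Equivalently, the restriction map $\pi_S\colon\m{C}_1\to\{0,1\}^S$, $\b{a}\mapsto\b{a}|_S$, is a bijection; write $\sigma\triangleq\pi_S^{-1}$ for its inverse section, so $\sigma(\b{u})$ is the unique member of $\m{C}_1$ whose restriction to $S$ equals $\b{u}$.

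Next, for each $\b{c}\in\m{C}_2$ I would set $\b{a}^{(\b{c})}\triangleq\sigma(\b{1}_S-\b{c}|_S)\in\m{C}_1$, i.e.\ the member of $\m{C}_1$ whose $S$-part is the complement, within $S$, of $\b{c}|_S$. The purpose of this choice is that $\bigl(\b{a}^{(\b{c})}+\b{c}\bigr)|_S=\b{1}_S$ for \emph{every} $\b{c}$, so the coordinates in $S$ of this sum are constant and carry no information about $\b{c}$. The candidate map is then $\phi\colon\m{C}_2\to\{0,1,2\}^T$, $\phi(\b{c})\triangleq\bigl(\b{a}^{(\b{c})}+\b{c}\bigr)|_T$.

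Finally, injectivity of $\phi$ is exactly where the multiset-union-free hypothesis enters. If $\phi(\b{c})=\phi(\b{d})$, then $\b{a}^{(\b{c})}+\b{c}$ and $\b{a}^{(\b{d})}+\b{d}$ agree on $T$ and, by the previous step, also on $S$, hence they coincide as vectors. Since $\b{a}^{(\b{c})},\b{a}^{(\b{d})}\in\m{C}_1$ and $\b{c},\b{d}\in\m{C}_2$, multiset-union-freeness forces $\b{a}^{(\b{c})}=\b{a}^{(\b{d})}$ and $\b{c}=\b{d}$, so $\phi$ is injective and $|\m{C}_2|\le 3^{|T|}$, which is the claim. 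I expect the only subtlety to be the first step, namely checking that shattering genuinely yields a bijection onto $\{0,1\}^S$ rather than merely a surjective multiset — but this is immediate from the cardinality equality $|S|=\log|\m{F}_1|$, and everything afterwards is a short formal verification, so I do not anticipate a real obstacle.
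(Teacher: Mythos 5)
Your proposal is correct and is essentially the paper's own argument: the paper also pairs each $F_2\in\m{F}_2$ with an $S$-complement member of $\m{F}_1$ (so the sum restricted to $S$ is constant) and then uses multiset-union-freeness to conclude that the sums restricted to $\overline{S}$ are distinct, giving the bound $2^{nR_2}\leq 3^{n(1-R_1)}$. Your version merely makes the injection into $\{0,1,2\}^{T}$ explicit (and notes the uniqueness of the complementary partner, which is true but not needed).
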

\begin{proof}
  Let $S$ be a set of cardinality $  nR_1  $ that is shattered by $\m{F}_1$. For every $F_2\in\m{F}_2$, there exists an $F_1\in \m{F}_1$ such that $F_1$ and $F_2$ are an \textit{$S$-complement} pair, i.e.,
  \begin{align}\label{eq:all1}
(F_1\cap S)\uplus (F_2\cap S) = S.
  \end{align}
Hence, there are at least $2^{nR_2}$ such $S$-complement pairs. By the multiset-union-free assumption, $(F_1\cap\overline{S}) \uplus (F_2\cap\overline{S})$ must be distinct for all $S$-complement pairs. Therefore, the number of such pairs cannot be larger than $3^{|\overline{S}|} = 3^{n(1-R_1)}$, and the theorem follows.
\end{proof}
For example, if $\m{F}_1$ is systematic and $R_2=1$, then the theorem implies that $R_1 \leq 0.37$. This strong bound is a consequence of the restriction to a systematic family. However, we note that the only property used in the proof is the existence of a large shattered set. Hence, any lower bound on the size of a maximal shattered set in a general family $\m{F}_1$ would lead to a similar result. The Sauer-Perles-Shelah lemma provides such a guarantee.
\begin{lemma}[Sauer-Perles-Shelah \cite{AlonSpencer}]
  Let $\m{F}$ be a family of subsets on an $n$-element set. If the cardinality of the maximal subset shattered by $\m{F}$ is $d$, then $|\m{F}| \leq \sum_{k=0}^d{n \choose k}$.
\end{lemma}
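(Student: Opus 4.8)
The plan is to prove this by induction on $n$, using an element-deletion operation on $\m{F}$. Fix any element $x\in[n]$ and split the family into two families on the $(n-1)$-element ground set $[n]\setminus\{x\}$: let $\m{G}\triangleq\{F\setminus\{x\} : F\in\m{F}\}$ be the projection of $\m{F}$ onto $[n]\setminus\{x\}$, and let $\m{H}\triangleq\{F\in\m{F} : x\notin F \text{ and } F\cup\{x\}\in\m{F}\}$ record the ``collisions'' created by this projection. A one-line counting argument gives $|\m{F}|=|\m{G}|+|\m{H}|$: the map $F\mapsto F\setminus\{x\}$ sends $\m{F}$ onto $\m{G}$, and the fiber over $G\in\m{G}$ has size $2$ precisely when both $G$ and $G\cup\{x\}$ lie in $\m{F}$, which is exactly what $\m{H}$ enumerates.

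Next I would control the shattering parameters of $\m{G}$ and $\m{H}$. First, any $T\subseteq[n]\setminus\{x\}$ shattered by $\m{G}$ is also shattered by $\m{F}$, since $\m{F}$ and $\m{G}$ induce the same traces on every set avoiding $x$; hence the largest shattered set of $\m{G}$ has size at most $d$. Second — and this is the key point — any $T\subseteq[n]\setminus\{x\}$ shattered by $\m{H}$ yields a set $T\cup\{x\}$ shattered by $\m{F}$: for each $A\subseteq T$ there is $F\in\m{H}$ with $F\cap T=A$, and then $F$ and $F\cup\{x\}$ both belong to $\m{F}$ and realize the traces $A$ and $A\cup\{x\}$ on $T\cup\{x\}$, so all $2^{|T|+1}$ subsets of $T\cup\{x\}$ occur. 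Therefore the largest shattered set of $\m{H}$ has size at most $d-1$.

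Applying the induction hypothesis to $\m{G}$ and to $\m{H}$ (both over a ground set of size $n-1$) gives $|\m{G}|\leq\sum_{k=0}^{d}\binom{n-1}{k}$ and $|\m{H}|\leq\sum_{k=0}^{d-1}\binom{n-1}{k}$, and summing these while invoking Pascal's identity $\binom{n-1}{k}+\binom{n-1}{k-1}=\binom{n}{k}$ yields $|\m{F}|\leq\sum_{k=0}^{d}\binom{n}{k}$. The base and degenerate cases ($n=0$; the empty family; the case $d=n$, where the bound just says $|\m{F}|\leq 2^n$) are immediate. I expect the main obstacle to be the second shattering inequality — verifying that a collision in the projection genuinely forces one extra dimension of shattering inside $\m{F}$ — together with pinning down the identity $|\m{F}|=|\m{G}|+|\m{H}|$ exactly; the rest is routine. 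Alternatively, one can run the same induction to prove the sharper statement that $\m{F}$ shatters at least $|\m{F}|$ distinct sets (Pajor's inequality), and then deduce the claimed bound by noting that every shattered set has at most $d$ elements and there are only $\sum_{k=0}^{d}\binom{n}{k}$ such subsets of $[n]$.
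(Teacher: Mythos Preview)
Your proof is correct; this is the standard element-deletion induction (often attributed to Shelah, or presented as the ``trace and link'' argument). The identity $|\m{F}|=|\m{G}|+|\m{H}|$ and the two shattering claims are exactly right, and Pascal's rule finishes it cleanly. The alternative via Pajor's inequality that you mention is also valid.

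Note, however, that the paper does not actually prove this lemma: it is quoted as a classical result with a reference to Alon--Spencer, so there is no ``paper's own proof'' to compare against. What the paper \emph{does} prove is the soft generalization (Lemma~\ref{lem:soft_sauer}), and there it uses a genuinely different technique: the shifting/compression argument of Alon~\cite{Alon83}, which replaces $\m{F}$ by a monotone family $\m{G}$ of the same cardinality whose $k$-shattered sets are also $k$-shattered by $\m{F}$, and then bounds $|\m{G}|$ by a direct layer-by-layer count. Your inductive approach would recover the classical $k=1$ case, but the shifting-to-monotone reduction is what lets the paper track multiplicities and obtain the $k$-shattering bound; it is not obvious how to push the deletion--collision decomposition through for general $k$, since the collision family $\m{H}$ does not interact as cleanly with the multiplicity requirement.
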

\begin{remark}
  It is easy to see that this bound is attained with equality if $\m{F}$ is a $n$-Hamming ball of radius $d$.
\end{remark}
\begin{corollary}
Let $\varepsilon>0$. If $|\m{F}| = 2^{n(R+\varepsilon)}$ then for any $n$ large enough, $\m{F}$ shatters a set $S\subseteq [n]$ with $|S|\geq n h^{-1}(R)$.
\end{corollary}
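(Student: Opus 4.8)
The plan is to apply the Sauer-Perles-Shelah lemma in its contrapositive form, together with the standard entropy estimate for truncated binomial sums. Suppose toward a contradiction that $\m{F}$ shatters no subset of size at least $n h^{-1}(R)$, and let $d$ denote the cardinality of the largest subset shattered by $\m{F}$; then $d < n h^{-1}(R)$. The Sauer-Perles-Shelah lemma gives $|\m{F}| \leq \sum_{k=0}^{d}{n \choose k}$, so it will suffice to show that the right-hand side is strictly less than $2^{n(R+\varepsilon)}$.

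To estimate the binomial sum I would use the inequality $\sum_{k=0}^{d}{n \choose k} \leq 2^{n h(d/n)}$, valid for $d \leq n/2$; this follows by expanding $1 = (p + (1-p))^n \geq \sum_{k \leq d}{n \choose k}p^k(1-p)^{n-k}$ with $p = d/n \leq \tfrac{1}{2}$ and using $p^k(1-p)^{n-k} \geq p^{d}(1-p)^{n-d}$ for $k \leq d$. Before invoking it, observe that the case $R = 1$ is vacuous: then $|\m{F}| = 2^{n(1+\varepsilon)} > 2^n$, which is impossible for a family of subsets of $[n]$. So we may assume $R < 1$, whence $h^{-1}(R) < \tfrac{1}{2}$ and therefore $d/n < h^{-1}(R) < \tfrac{1}{2}$, so the estimate applies.

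Chaining the two bounds and using that $h$ is strictly increasing on $[0,\tfrac{1}{2}]$,
\begin{align*}
  |\m{F}| \;\leq\; \sum_{k=0}^{d}{n \choose k} \;\leq\; 2^{n h(d/n)} \;<\; 2^{n h(h^{-1}(R))} \;=\; 2^{nR} \;\leq\; 2^{n(R+\varepsilon)},
\end{align*}
contradicting $|\m{F}| = 2^{n(R+\varepsilon)}$. I do not anticipate any genuine obstacle; the only mild points of care are the constraint $d/n \leq \tfrac{1}{2}$ needed for the binomial estimate (disposed of by the trivial case $R = 1$) and the strictness of the penultimate inequality, which is exactly the strict monotonicity of $h$ on $[0,\tfrac{1}{2}]$. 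In fact the same computation, run with $\varepsilon = 0$, shows that for \emph{every} $n$ one has $d \geq n h^{-1}(R)$ --- either directly because $d \geq n/2 \geq n h^{-1}(R)$, or, when $d < n/2$, because $2^{nR} = |\m{F}| \leq 2^{n h(d/n)}$ forces $h^{-1}(R) \leq d/n$ --- so $\m{F}$ in fact always shatters a set of size $\lceil n h^{-1}(R)\rceil$; the statement as given records only the weaker asymptotic version that is convenient in the sequel.
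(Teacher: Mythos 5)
Your proof is correct and is precisely the argument the paper leaves implicit for this corollary: combine the Sauer--Perles--Shelah lemma with the standard estimate $\sum_{k\le d}\binom{n}{k}\le 2^{nh(d/n)}$ for $d\le n/2$, handling $R=1$ as a vacuous case. Your closing observation that neither $\varepsilon>0$ nor the ``$n$ large enough'' qualifier is actually needed is also accurate.
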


Plugging the above into Weldon's argument yields:
\begin{proposition}
If the pair $\m{F}_1,\m{F}_2$ is multiset-union-free, then $R_2 \leq (1-h^{-1}(R_1))\log{3}$.
\end{proposition}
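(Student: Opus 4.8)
The plan is to combine the Corollary above (Sauer--Perles--Shelah) with the counting argument in the proof of Weldon's Theorem~\ref{thm:weldon}, observing that the latter used nothing about the shattered set beyond its existence and its size; in particular it did \emph{not} need the shattered set to have the maximal cardinality $nR_1$, only some guaranteed cardinality.

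Concretely, I would fix an admissible pair $(R_1,R_2)$ together with a sequence of multiset-union-free families $\m{F}_1,\m{F}_2$ on $[n]$ with $|\m{F}_1| = 2^{n(R_1+o(1))}$ and $|\m{F}_2| = 2^{n(R_2+o(1))}$. By the Sauer--Perles--Shelah lemma, $\m{F}_1$ shatters a set $S=S_n\subseteq[n]$ whose cardinality $d_n$ satisfies $2^{n(R_1+o(1))} = |\m{F}_1| \leq \sum_{k=0}^{d_n}\binom{n}{k}$. If $d_n \geq n/2$ then trivially $d_n \geq n\,h^{-1}(R_1)$; otherwise $\sum_{k=0}^{d_n}\binom{n}{k} \leq 2^{n h(d_n/n)}$, so $h(d_n/n) \geq R_1 + o(1)$, and since $h^{-1}$ is increasing and continuous on $[0,\tfrac12]$ this gives $d_n/n \geq h^{-1}(R_1) + o(1)$. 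Either way, $|S| \geq n\big(h^{-1}(R_1)+o(1)\big)$.

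Next I would replay the proof of Theorem~\ref{thm:weldon} verbatim with this $S$ in place of the size-$nR_1$ shattered set used there. For each $F_2\in\m{F}_2$, since $S$ is shattered by $\m{F}_1$ there is an $F_1\in\m{F}_1$ with $(F_1\cap S)\uplus(F_2\cap S)=S$, producing at least $|\m{F}_2|$ distinct $S$-complement pairs (they have distinct second coordinates). If two such pairs $(F_1,F_2),(F_1',F_2')$ satisfied $(F_1\cap\overline{S})\uplus(F_2\cap\overline{S}) = (F_1'\cap\overline{S})\uplus(F_2'\cap\overline{S})$, then together with \eqref{eq:all1} for both pairs we would get $F_1\uplus F_2 = F_1'\uplus F_2'$, forcing $F_1=F_1'$ and $F_2=F_2'$ by the multiset-union-free property. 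Hence $|\m{F}_2| \leq 3^{|\overline{S}|} \leq 3^{n(1-h^{-1}(R_1)+o(1))}$, i.e.\ $2^{n(R_2+o(1))} \leq 3^{n(1-h^{-1}(R_1)+o(1))}$; taking logarithms, dividing by $n$ and letting $n\to\infty$ yields $R_2 \leq (1-h^{-1}(R_1))\log 3$.

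I do not expect any genuine obstacle here: the construction of the $S$-complement pairs and the injectivity argument are an unchanged reuse of Weldon's counting, and the only point needing mild care is that the Corollary is stated with a strict slack $\varepsilon>0$, so one must either track the $o(1)$ terms explicitly or apply the Corollary with $R=R_1-\varepsilon$ for each fixed $\varepsilon>0$ and then send $\varepsilon\to0$ using the continuity of $h^{-1}$, in order to pass from the finite-$n$ inequalities to the asymptotic conclusion.
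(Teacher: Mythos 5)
Your proof is correct and is exactly the paper's intended argument: the paper proves this proposition by the one-line remark ``plugging the Sauer--Perles--Shelah corollary into Weldon's argument,'' and your write-up simply makes that combination explicit, including the correct handling of the $o(1)$ slack in the corollary. No issues.
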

Unfortunately, this bound is trivial since $R_1+(1-h^{-1}(R_1))\log{3} > \tfrac{3}{2}$ for any $R_1$. This stems from two main weaknesses. First, we have taken the worse case assumption that each subset $F_2\in \m{F}_2$ has only one subset $F_1\in\m{F}_1$ such that $F_1$ and $F_2$ are $S$-complement, where $S$ is a shattered set in $\m{F}_1$. Second, bounding the number of $S$-complement pairs by $3^{|\overline{S}|}$ may be loose, as it ignores the multiset union structure. In the next two subsections, we provide the technical tools to handle each of these weaknesses. We then apply them to prove the theorem in the subsection that follows.

\subsection{A Soft  Sauer-Perles-Shelah Lemma}

Let $\m{F}$ be a family of subsets of $[n] \triangleq\{1,\ldots,n\}$, and $S\subseteq [n]$. We say that $S$ is \textit{$k$-shattered} by $\m{F}$, if the projection multiset
$P_S^+(\m{F})$ of $\m{F}$ on $S$ contains all subsets of $S$ each with multiplicity of at least $k$. For $k=1$, this definition reduces to the regular definition of a shattered set.

In Section \ref{sec:proof_soft_sauer}, we prove the following Lemma.
\begin{lemma}\label{lem:soft_sauer}
  Let $\m{F}$ be a family of subsets of an $n$-element set. If the cardinality of the maximal subset that is $k$-shattered by $\m{F}$ is $d-1$, then
  \begin{align*}
     |\m{F}|\leq \sum_{t=1}^{t^*}{n\choose t}  + {n\choose t^*}\sum_{t=t^*+1}^{n}\frac{{t^*\choose d}}{{t \choose d}}
  \end{align*}
where $t^*$ is the smallest integer $t$ satisfying ${n-d \choose t-d} \geq k$ if such an integer exists, and $t^*=n$ otherwise.
\end{lemma}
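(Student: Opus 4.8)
The plan is to reduce to down-closed families by compression, reinterpret the hypothesis as a bound on how many members of $\m{F}$ a given $d$-set can be contained in, and then finish by a double count together with a layer-by-layer optimization; the threshold $t^*$ will emerge as the size at which the double-count estimate overtakes the trivial one. First I would apply the usual down-compression operators $D_i$ ($i\in[n]$): replace each $F\in\m{F}$ with $i\in F$ and $F\setminus\{i\}\notin\m{F}$ by $F\setminus\{i\}$, and leave every other member untouched. Each $D_i$ is a bijection of $\m{F}$ onto its image, so $|D_i(\m{F})|=|\m{F}|$, and iterating until nothing changes produces a down-set of the same cardinality. The point to verify is that compression cannot enlarge the largest $k$-shattered set, i.e.\ that if $D_i(\m{F})$ $k$-shatters $S$ then so does $\m{F}$: if $i\notin S$ the multiset of traces on $S$ is literally unchanged; if $i\in S$ one uses the elementary fact that any member of $D_i(\m{F})$ containing $i$ already lies in $\m{F}$ together with its $i$-deletion, so that the occurrences of a pattern $G\ni i$ are inherited directly, while for $G\not\ni i$ one applies the same fact to $G\cup\{i\}$ and then deletes $i$ to produce $\ge k$ members of $\m{F}$ with trace $G$. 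Hence we may assume $\m{F}$ is a down-set whose largest $k$-shattered set has size $d-1$.

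For a down-set this hypothesis has a clean meaning. If $D\subseteq[n]$ and $G\subseteq D$, then $F\mapsto F\setminus(D\setminus G)$ injects $\{F\in\m{F}:D\subseteq F\}$ into $\{F\in\m{F}:F\cap D=G\}$, so the ``full'' pattern $D$ has the smallest multiplicity in $P_D^+(\m{F})$; consequently $D$ is $k$-shattered iff $|\{F\in\m{F}:D\subseteq F\}|\ge k$, and the hypothesis says precisely that every $d$-subset of $[n]$ lies in at most $k-1$ members of $\m{F}$. Counting incidences between members of $\m{F}$ and the $d$-subsets they contain then gives
\[
\sum_{F\in\m{F}}\binom{|F|}{d}=\sum_{D\in\binom{[n]}{d}}\bigl|\{F\in\m{F}:D\subseteq F\}\bigr|\le\binom{n}{d}(k-1),
\]
so, with $f_t$ the number of members of $\m{F}$ of size $t$, we have $f_t\binom{t}{d}\le\binom{n}{d}(k-1)$ for every $t$ while trivially $f_t\le\binom{n}{t}$.

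Summing $f_t\le\min\{\binom{n}{t},\ \binom{n}{d}(k-1)/\binom{t}{d}\}$ and using $\binom{n}{d}\binom{n-d}{t-d}=\binom{n}{t}\binom{t}{d}$, the trivial bound $\binom{n}{t}$ is the smaller of the two exactly when $\binom{n-d}{t-d}\le k-1$, i.e.\ for $t<t^*$; at $t=t^*$ we keep $\binom{n}{t^*}$, and for $t>t^*$ we relax $\binom{n}{d}(k-1)/\binom{t}{d}\le\binom{n}{d}\binom{n-d}{t^*-d}/\binom{t}{d}=\binom{n}{t^*}\binom{t^*}{d}/\binom{t}{d}$ (valid since $\binom{n-d}{t^*-d}\ge k$ by definition of $t^*$). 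This gives $|\m{F}|\le\sum_{t=0}^{t^*}\binom{n}{t}+\binom{n}{t^*}\sum_{t=t^*+1}^{n}\binom{t^*}{d}/\binom{t}{d}$, which is the asserted bound (the empty set, if it survives compression, accounts for the $t=0$ term), and the case $t^*=n$ is immediate. I expect the reduction step --- verifying that down-compression does not create a larger $k$-shattered set --- to be the only genuinely delicate point; the rest is bookkeeping, and the form of the bound, in particular the appearance of $t^*$, is forced by the crossover between the two per-layer estimates.
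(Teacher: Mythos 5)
Your proposal is correct and follows essentially the same route as the paper: down-compression to a monotone family (with the same verification that compression cannot create a new $k$-shattered set), the observation that for a down-set the full pattern has minimal multiplicity so the hypothesis becomes a bound on how many members contain a given $d$-set, and a per-layer count with the crossover at $t^*$. The only differences are cosmetic — you phrase the counting as a global double count (yielding $k-1$ where the paper's averaging gives $k$) and you carry the harmless $t=0$ term, neither of which changes anything.
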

\begin{remark}
Note that if $k={n-d \choose t^*-d}$ for some $t^*$, then our bound is tight for a $n$-Hamming ball of radius $t^*$, up to multiplicative gap of $O(n/d)$. This coincides with the Sauer-Perles-Shelah Lemma for $k=1$ (and $t^*=d$), up to the aforementioned multiplicative factor. Since we are only interested in exponential behavior, no attempt has been made to reduce this gap.
\end{remark}

\begin{corollary}\label{cor:soft_sauer}
Let $\varepsilon>0$. If $|\m{F}| = 2^{n(R+\varepsilon)}$ then for any $0\leq \alpha \leq h^{-1}(R)$ and any $n$ large enough, there exists a set $S\subseteq[n]$ with $|S|\geq n\alpha$ that is $\;2^{n\beta}$-shattered by $\m{F}$, where
  \begin{align}\label{eq:beta}
    \beta = (1-\alpha)\cdot h\left(\frac{h^{-1}(R)-\alpha}{1-\alpha}\right)
\end{align}
\end{corollary}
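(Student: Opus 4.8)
The plan is to deduce the corollary from Lemma~\ref{lem:soft_sauer} by a contradiction argument. Fix $\varepsilon>0$ and $0<\alpha\le h^{-1}(R)$; the case $\alpha=0$ is trivial, since then $\beta=R$ and the empty set is $2^{n\beta}$-shattered by $\m{F}$ whenever $|\m{F}|\ge 2^{nR}$. Let $\beta$ be as in~\eqref{eq:beta} and set $k=\lceil 2^{n\beta}\rceil$. Suppose, for contradiction, that no subset of $[n]$ of cardinality at least $n\alpha$ is $2^{n\beta}$-shattered (equivalently, $k$-shattered) by $\m{F}$. Then the maximal $k$-shattered subset of $\m{F}$ has some cardinality $d-1$ with $1\le d\le\lceil n\alpha\rceil$, and Lemma~\ref{lem:soft_sauer} (applied with this $d$) yields
\begin{align*}
|\m{F}|\le \sum_{t=1}^{t^*}\binom{n}{t}+\binom{n}{t^*}\sum_{t=t^*+1}^{n}\frac{\binom{t^*}{d}}{\binom{t}{d}},
\end{align*}
where $t^*$ is the smallest $t$ with $\binom{n-d}{t-d}\ge k$. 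We shall show the right-hand side is $2^{n(R+o(1))}$, contradicting $|\m{F}|=2^{n(R+\varepsilon)}$ once $n$ is large; note that the argument must be uniform over $d\in\{1,\dots,\lceil n\alpha\rceil\}$.

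The heart of the matter is to locate $t^*$, and this is exactly what motivates the choice~\eqref{eq:beta} of $\beta$. On the increasing branch of the binomial, using $\binom{m}{pm}=2^{mh(p)+O(\log m)}$, the inequality $\binom{n-d}{t-d}\ge 2^{n\beta}$ first holds when $\frac{t-d}{n-d}$ is essentially the least $p$ with $(n-d)\,h(p)\ge n\beta$; for $d\approx n\alpha$ this is $p\approx\tfrac{h^{-1}(R)-\alpha}{1-\alpha}$, whence $t^*\approx n\alpha+(n-n\alpha)\,\tfrac{h^{-1}(R)-\alpha}{1-\alpha}=nh^{-1}(R)$, the point where $\binom{n}{t^*}\approx 2^{nR}$. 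I would make this rigorous as the one-sided bound $t^*\le nh^{-1}(R)+o(n)$: plugging $t-d=(n-d)\,\tfrac{h^{-1}(R)-\alpha}{1-\alpha}$ into the estimate $\binom{n-d}{t-d}\ge 2^{(n-d)h(\cdot)}/(n-d+1)=2^{n\beta}/\Theta(n)$ shows that only $O(\log n)$ further increments of $t$ suffice to reach $k$, while the existence of $t^*$ follows from $\alpha+\beta<1$, which gives $\binom{n-d}{\lfloor(n-d)/2\rfloor}\ge k$ for $n$ large. To handle the dependence on $d$ (only approximately $n\alpha$), I would note that $t^*/n$ lies near $\phi(d/n)$ for $\phi(\gamma)\triangleq\gamma+(1-\gamma)\,h^{-1}\!\big(\tfrac{\beta}{1-\gamma}\big)$, and verify that $\phi$ is nondecreasing on $[0,\alpha]$ with $\phi(\alpha)=h^{-1}(R)$; hence $t^*\le nh^{-1}(R)+o(n)$ for every admissible $d$.

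With $t^*\le nh^{-1}(R)+o(n)$ established, the two sums are bounded crudely. Since $h^{-1}(R)\le\tfrac12$, for $n$ large we have $t^*\le n/2$, so $\sum_{t=1}^{t^*}\binom{n}{t}\le(t^*+1)\binom{n}{t^*}\le 2^{n(R+o(1))}$, using $h(t^*/n)=R+o(1)$. For the second sum, $\binom{t}{d}$ is nondecreasing in $t$ for $t\ge d$ and $t^*\ge d$, so each ratio $\binom{t^*}{d}/\binom{t}{d}$ with $t\ge t^*$ is at most $1$; thus $\sum_{t=t^*+1}^{n}\binom{t^*}{d}/\binom{t}{d}\le n$, and the second term is at most $n\binom{n}{t^*}\le 2^{n(R+o(1))}$. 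Hence $|\m{F}|\le 2^{n(R+o(1))}<2^{n(R+\varepsilon)}$ for $n$ large, the desired contradiction, so $\m{F}$ does $2^{n\beta}$-shatter some set of cardinality at least $n\alpha$.

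I expect the only real difficulty to be the estimate $t^*=nh^{-1}(R)+o(n)$: one has to control the polynomial factors in the binomial estimates \emph{uniformly} over $d\in\{1,\dots,\lceil n\alpha\rceil\}$, and dispose of the boundary cases (such as $R=1$ with $\alpha$ small) where $\tfrac{h^{-1}(R)-\alpha}{1-\alpha}$ approaches an endpoint of $[0,\tfrac12]$. Once $t^*$ is pinned down, the remaining steps are routine, since only exponential rates matter.
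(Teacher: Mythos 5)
Your proposal is correct and follows essentially the same route as the paper: a contradiction argument via Lemma~\ref{lem:soft_sauer}, locating $t^*$ through the binomial asymptotics so that the choice~\eqref{eq:beta} of $\beta$ forces $t^*=nh^{-1}(R)+o(n)$ and hence a bound of $2^{n(R+o(1))}$ on $|\m{F}|$. The only difference is that you track the dependence on the unknown $d\le n\alpha$ explicitly via the monotonicity of $\phi$, whereas the paper simply applies the bound with $d=n\alpha$ (legitimate, since $k$-shattering is downward closed and the internal proposition of Section~\ref{sec:proof_soft_sauer} only requires that no set of cardinality $d$ be $k$-shattered); your extra care is harmless but not a different argument.
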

\begin{proof}
Let $0\leq \alpha \leq h^{-1}(R)$ and assume to the contrary that the claim does not hold. Denote $t^*=\gamma_n n$, and write
\begin{align*}
\frac{1}{n}\log{n-d\choose t^*-d} &= \frac{n-d}{n} \left(h\left(\frac{t^*-d}{n-d}\right) + o(1)\right) \\
&= (1-\alpha + o(1))h\left( \frac{\gamma_n - \alpha + o(1)}{1-\alpha + o(1)}\right)
\end{align*}
We can set $\gamma_n$ to the minimal value guaranteeing that the above is at least $\beta$, which is $\gamma_n =\alpha+(1-\alpha)h^{-1}\left(\frac{\beta}{1-\alpha}\right) + o(1)$. Invoking Lemma \ref{lem:soft_sauer}, it must then be that $|\m{F}|>  2^{n(h(\gamma_n) + o(1))} = 2^{n(R+o(1))}$, contradicting the assumption.
\end{proof}

\subsection{An Information Theoretic Lemma}
We define a natural generalization of the multiset-union-free property for sets of family pairs. A \textit{system} $\m{U}$ is a set of pairs $\{\m{F}_{1,i},\m{F}_{2,i}\}_{i=1}^{M_0}$, where each $\m{F}_{1,i}$ (resp.  $\m{F}_{2,i}$) is a family of subsets of $[n]$ with fixed cardinality $|\m{F}_{1,i}| = M_1$ (resp. $|\m{F}_{2,i}| = M_2$). We say that $\m{U}$ is a \textit{multiset-union-free system} if each pair $(\m{F}_{1,i},\m{F}_{2,i})$ is multiset-union-free, and the families of multisets $\m{F}_{1,i}\uplus \m{F}_{2,i}$ are mutually disjoint.

A triplet $(r_0,r_1,r_2)$ is called admissible if there exists a sequence of multiset-union-free systems $\m{U}$ with $M_\ell = 2^{n(r_\ell+o(1))}$ for $\ell\in\{0,1,2\}$. The goal of this subsection is to provide a necessary condition for a triplet to be admissible. In the Weldon-type arguments mentioned above, the number of $S$-complement pairs was bounded by $3^{|\overline{S}|}$, thereby ignoring the multiset union structure. As we shall see in the next subsection, this structure can be accounted for by partitioning each family according to its projection on $S$, which naturally gives rise to a system with $r_0\leq |S|/|\overline{S}|$. Moreover, any upper bound on the corresponding admissible sum $r_0+r_1+r_2$ can be translated into an upper bound on the number of $S$-complement pairs in our original setup.

For $r_0=0$, the problem coincides with the standard multiset-union-free problem, for which $r_0+r_1+r_2\leq \tfrac{3}{2}$ follows from the information theoretic argument given in Section \ref{sec:intro}. It is also easy to see that for a large enough value of $r_0$, the sum $r_0+r_1+r_2 = \log{3}$ is admissible. For example, let $\m{F}_0 = \{F_{0,1},\ldots,F_{0,M_0}\}$ be the set of all subsets of $[n]$ with cardinality $2n/3$, and identify each pair $\{\m{F}_{1,i},\m{F}_{2,i}\}$ in the system $\m{U}$ with one of the these subsets. Let $\m{F}_{1,i} = \{F_{0,i}\}$ and $\m{F}_{2,i}=\{F\subset [n]: F\subseteq F_{0,i}\}$. Clearly, each pair $(\m{F}_{1,i},\m{F}_{2,i})$ is multiset-union-free, and moreover, the families of multisets $\m{F}_{1,i}\uplus \m{F}_{2,i}$ as defined above are disjoint, as exactly all the elements of $F_{0,i}$ participate in each corresponding family of multisets. For this construction, $r_0=\tfrac{1}{n}\log {n \choose 2n/3} \approx h(\tfrac{1}{3})$, $r_1=0$ and $r_2 = \tfrac{2}{3}$, hence in the limit of large $n$ this construction yields $r_0+r_1+r_2 = \log{3}$. The next lemma refines these observations by upper bounding admissible sums $r_0+r_1+r_2$ between $\tfrac{3}{2}$ and $\log{3}$, as a function of $r_0$ and $r_1$. The proof appears in Section \ref{sec:proof_sw}.

\begin{lemma}\label{lem:sw}
  Let $L(\eta)$ and $J(p,\eta)$ be as defined in~\eqref{eq:LJ}. If $(r_0,r_1,r_2)$ is admissible, then
\begin{align*}
  r_0+r_1+r_2 \leq \max_{h^{-1}(r_1)\leq \eta\leq \frac{1}{2}} \min\{L(\eta),\, J(h^{-1}(r_1),\eta)+r_0\}
\end{align*}
\end{lemma}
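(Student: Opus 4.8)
The plan is to run the entropy argument of Section~\ref{sec:intro} on the ternary sum, but extract a second, combinatorial upper bound on its entropy. Fix a multiset-union-free system $\m{U}=\{\m{F}_{1,i},\m{F}_{2,i}\}_{i=1}^{M_0}$ witnessing admissibility of $(r_0,r_1,r_2)$; draw $i$ uniformly, then $\mathbf X_1\in\m{F}_{1,i}$ and $\mathbf X_2\in\m{F}_{2,i}$ uniformly and independently, and set $\mathbf S=\mathbf X_1+\mathbf X_2$ over the reals. Since each pair is multiset-union-free and the multiset-unions are mutually disjoint, the map $\mathbf S\mapsto(i,\mathbf X_1,\mathbf X_2)$ is well defined, so $H(\mathbf S)=\log M_0M_1M_2=n(r_0+r_1+r_2+o(1))$. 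The task is to bound $H(\mathbf S)$ from above in two ways.

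First I would make a harmless reduction. For each $i$ the densest Hamming-weight layer of $\m{F}_{1,i}$ has $\geq M_1/(n+1)$ members, and a sphere-packing bound forces its weight to be $n\theta_i$ with $\theta_i\in[h^{-1}(r_1),1-h^{-1}(r_1)]$ up to $o(1)$; restricting each $\m{F}_{1,i}$ to that layer and keeping the $\Omega(M_0/n)$ indices that share a common value $\theta$ leaves a valid multiset-union-free system with all $M_\ell$ unchanged up to sub-exponential factors, in which every $\mathbf X_1$ has weight $n\theta$ and $\theta\geq p:=h^{-1}(r_1)$. Write $\mathbf B:=\mathbf X_1\oplus\mathbf X_2$, $\mathbf A:=\mathbf X_1\wedge\mathbf X_2$, so that $\mathbf S$ is recovered from the pair $(\mathbf B,\mathbf A)$, and let $n\eta$ be the (concentrated, or dominant after conditioning) Hamming distance $d_H(\mathbf X_1,\mathbf X_2)=|\mathbf B|$; one checks $\eta\geq p$, and $\eta\leq\tfrac12$ may be assumed since for $\eta>\tfrac12$ the first bound below already gives $H(\mathbf S)<\tfrac32 n$, which is dominated by the value of the right-hand side at $\eta=\tfrac12$.

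The structure-free bound is $H(\mathbf S)\leq H(\mathbf B)+H(\mathbf A\mid\mathbf B)\leq nh(\eta)+n(1-\eta)+o(n)=nL(\eta)+o(n)$, using that $\mathbf B$ has weight $n\eta$, that $\mathbf A$ is supported on the complementary $n(1-\eta)$ coordinates, and concavity of $h$. The structured bound exploits that conditioning on $i$ restores the independence of $\mathbf X_1$ and $\mathbf X_2$: with $a_{k,i}:=\Pr(X_{1,k}=1\mid i)$ and $b_{k,i}:=\Pr(X_{2,k}=1\mid i)$,
\begin{align*}
H(\mathbf S)\leq H(i)+\sum_{k=1}^n H(X_{1,k}+X_{2,k}\mid i)\leq nr_0+\sum_{k,i}\Pr(i)\,H\big((1-a_{k,i})(1-b_{k,i}),\,a_{k,i}\star b_{k,i},\,a_{k,i}b_{k,i}\big).
\end{align*}
Here $\tfrac1n\sum_k a_{k,i}=\theta\geq p$ and $\tfrac1n\sum_{k,i}\Pr(i)(a_{k,i}\star b_{k,i})=\eta$, and a concavity argument should collapse the sum to a single effective parameter pair $(a,b)$; the resulting finite-dimensional problem $\max\{H((1-a)(1-b),\,a\star b,\,ab):a\star b=\eta,\ a\geq p,\ 0\leq a,b\leq1\}$ has value exactly $J(p,\eta)$ as in~\eqref{eq:LJ} — the proof is by Lagrange multipliers, observing that with the middle mass $\eta$ fixed one maximizes the entropy of the two extreme masses by making them as balanced as $|1-a-b|$ permits, which forces $a=b$ when $\eta\geq p\star p$ and $a=p$ otherwise. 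This gives $H(\mathbf S)\leq nr_0+nJ(p,\eta)+o(n)$. Combining, $r_0+r_1+r_2\leq\min\{L(\eta),J(p,\eta)+r_0\}+o(1)\leq\max_{p\leq\eta\leq 1/2}\min\{L(\eta),J(p,\eta)+r_0\}+o(1)$, and $n\to\infty$ finishes the proof.

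The main obstacle is the collapse step in the structured bound: the per-coordinate parameters $a_{k,i}$ need not be constant (only their average is $\theta$), so one must invoke concavity of $(a,b)\mapsto H((1-a)(1-b),a\star b,ab)$ in the appropriate sense, and — more delicately — one must ensure the Hamming-distance parameter $\eta$ can be taken to be the \emph{same} in the structure-free and structured bounds; since averaging does not commute with $\star$, one has to track the correlation between the $a_{k,i}$ and $b_{k,i}$ (equivalently $\mathrm{Cov}$ against $\mathbf A$), either by conditioning on the dominant joint type of $\mathbf S$ or by exploiting monotonicity of $L$ and $J$ in $\eta$. Verifying the closed form of $J(p,\eta)$ is then a routine, if lengthy, calculus exercise carried out in Section~\ref{sec:proof_sw}.
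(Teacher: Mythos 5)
Your high-level architecture matches the paper's: bound $H(\mathbf X_1+\mathbf X_2)$ once unconditionally to get $L(\eta)$, and once conditioned on the system index to get $r_0+J(h^{-1}(r_1),\eta)$, with $\eta\ge h^{-1}(r_1)$. But the step you defer --- ``a concavity argument should collapse the sum to a single effective parameter pair $(a,b)$'' --- is the actual content of the lemma, and the route you sketch for it does not go through. Concavity of $(a,b)\mapsto H\big((1-a)(1-b),\,a\star b,\,ab\big)$ lets you replace the per-coordinate parameters by their averages, but, as you yourself note, the constraint $a\star b=\eta$ is not preserved under averaging: $\Expt a\star\Expt b=\Expt(a\star b)+2\,\mathrm{Cov}(a,b)$, and that covariance is exactly the quantity that must be controlled for the same $\eta$ to appear in both bounds. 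The paper does \emph{not} collapse to a single pair. It instead runs a Wyner-style argument: symmetrize $F(a,b)\le F(\gamma,\gamma)$ with $\gamma=(a+b)/2$ (the inequality in~\eqref{eq:F1}), rewrite the result as a concave \emph{decreasing} function $G$ of $(\gamma-\tfrac12)^2$, apply Jensen, and then \emph{lower}-bound $\Expt(a+b)^2$ subject to $\Expt ab=\tfrac12(1-\eta)$ and $\Expt h(a)\ge r_1$ via two dedicated lemmas (Lemma~\ref{lem:lambda}, a quadratic-form bound, and Lemma~\ref{lem:h_var}, which converts the entropy constraint into a second-moment constraint). Even the side condition $\eta\ge h^{-1}(r_1)$, which you dismiss with ``one checks,'' requires its own Cauchy--Schwarz argument (Lemma~\ref{lem:CSI}). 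None of this is replaceable by a one-line Jensen step, so the gap you flag is not a technicality but the whole proof.

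A concrete symptom that your proposed endpoint is wrong: the single-pair problem $\max\{H((1-a)(1-b),a\star b,ab):a\star b=\eta,\ a\ge p\}$ does \emph{not} equal $J(p,\eta)$ in the regime $\eta<p\star p$. There the constrained optimum is at $a=p$, $b=(\eta-p)/(1-2p)$ with value $F(p,b)$, which is strictly smaller than the paper's $J(p,\eta)=F\big(\tfrac{p+b}{2},\tfrac{p+b}{2}\big)$ (by strict concavity of $F$ along the segment from $(p,b)$ to $(b,p)$); the paper explicitly remarks that its bound is believed non-tight in that regime for precisely this reason. So either your collapse is invalid (most likely, because of the covariance issue) or you are implicitly claiming a strictly stronger inequality than the paper proves, which would need a genuinely new argument. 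Two secondary problems: defining $\eta$ as a ``concentrated'' Hamming distance is not legitimate without conditioning on $d_H(\mathbf X_1,\mathbf X_2)$, and such conditioning destroys the conditional independence of $\mathbf X_1,\mathbf X_2$ given $i$ that your second bound relies on (the paper sidesteps this by defining $\eta=\Pr(X_1\neq X_2)$ for the single-letter variables once, so the same $\eta$ enters both bounds by construction); and your weight-layer reduction yields the constraint $\Expt a=\theta$ with $h(\theta)\ge r_1$, which is not the constraint $\Expt h(a)\ge r_1$ that the paper's Lemma~\ref{lem:h_var} actually exploits.
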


\begin{remark}
  Note that it can be shown that the maximization can be further restricted to $h^{-1}(r_1)\star h^{-1}(r_2)\leq \eta \leq \frac{1}{2}$. This however is not useful for our purposes.
\end{remark}

\subsection{Putting it Together}
We are now in a position to prove Theorem \ref{thm:main}. Let $\m{F}_1,\m{F}_2$ be a pair of multiset-union-free families of cardinalities $2^{nR_1}$ and $2^{nR_2}$ respectively. Given this pair, we use Corollary \ref{cor:soft_sauer} to  construct a multiset-union-free system with certain cardinalities, and then apply Lemma \ref{lem:sw} to obtain constraints on that system.

By Corollary \ref{cor:soft_sauer}, for any $\alpha<h^{-1}(R_1)$ there exists a subset $S\subset [n]$ of cardinality $n\alpha$ that is $2^{n\beta}$-shattered by $\m{F}_1$, where $\beta$ is given in~\eqref{eq:beta}, all up to an $o(1)$ term. Let $\m{F}_0$ be the family of all subsets of $S$, and for any $G\in\m{F}_0$ let $\m{F}_{1,G} = \{F\in\m{F}_1 : F\cap S = G\}$. Define $\m{F}_{2,G}$ similarly, and note that $\{\m{F}_{i,G}\}_{G\in\m{F}_0}$ is a partition of $\m{F}_i$ for each $i\in\{1,2\}$.

By construction, $|\m{F}_{1,G}| \geq 2^{n\beta}$. We can therefore arbitrarily choose $\wt{\m{F}}_{1,G}\subseteq \m{F}_{1,G}$ such that $|\wt{\m{F}}_{1,G}| = 2^{n\beta}$. For each $G$ with $|\m{F}_{2,G}|>0$, arbitrarily choose $\wt{\m{F}}_{2,G}\subseteq \m{F}_{2,G}$ such that $\log|\wt{\m{F}}_{2,G}| = \lfloor\log|\m{F}_{2,G}|\rfloor$. Note that this guarantees that $|\wt{\m{F}}_{2,G}| = 2^k$ for some integer $0\leq k\leq nR_2$, and that $|\wt{\m{F}}_{2,G}| \geq |\m{F}_{2,G}|/2$. Moreover, there must exist an integer $k'$ with the property that the union of all $\wt{\m{F}}_{2,G}$ of cardinality $2^{k'}$ contains at least $\tfrac{1}{2(nR_2+1)}2^{nR_2}$ subsets. Let $\m{G}$ be the set of all $G\in\m{F}_0$ that correspond to this $k'$, and note that by construction $|\m{G}|= 2^{n\alpha'}$ for some $\alpha'\leq \alpha$. Moreover, $|\wt{\m{F}}_{2,G}| = 2^{k'} \geq  \tfrac{1}{2(nR_2+1)}2^{n(R_2-\alpha')}$ for all $G\in\m{G}$.

Let $\overline{G} = S\setminus G$, and define the system $\m{U} = \{(\wt{\m{F}}_{1,\overline{G}},\wt{\m{F}}_{2,G})\}_{G\in\m{G}}$. Since the original $\m{F}_1$ and $\m{F}_2$ are multiset-union-free, then $\m{U}$ is trivially a multiset-union-free system. Moreover, since any $F_1\in \wt{\m{F}}_{1,\overline{G}}$ and $F_2\in \wt{\m{F}}_{2,G}$ are an $S$-complement pair~\eqref{eq:all1}, the projection\footnote{Note that $P^+_{\overline{S}}(\wt{\m{F}}_{1,\bar{G}})$ and $P^+_{\overline{S}}(\wt{\m{F}}_{2,G})$ have no multiplicities.} $\m{U}_{\overline{S}} =  \{(P^+_{\overline{S}}(\wt{\m{F}}_{1,\overline{G}}), P^+_{\overline{S}}(\wt{\m{F}}_{2,G}))\}_{G\in\m{G}}$ of $\m{U}$ onto $\overline{S}$ is also a multiset-union-free system, over $|\overline{S}| = n(1-\alpha)$ elements.

We have thus shown that given a multiset-union-free pair over $[n]$ with cardinalities $2^{nR_1}$ and $2^{nR_2}$, we can construct a multiset-union-free system $\m{U}_{\overline{S}}$ over $[m]= [n(1-\alpha)]$ with cardinalities $M_0 =  2^{mr_0}$, $M_1 = 2^{mr_1}$ and $M_2 = 2^{m\left(r_2+o(1)\right)}$, where $r_0=\frac{\alpha'}{1-\alpha}$, $r_1=\frac{\beta}{1-\alpha}$ and $r_2=\frac{R_2-\alpha'}{1-\alpha}$. Thus for this system $r_0+r_1+r_2 = \frac{R_2+\beta}{1-\alpha}$, and by Lemma \ref{lem:sw} we have that
\begin{align*}
  \frac{R_2+\beta}{1-\alpha} &\leq \max_{h^{-1}\left(\frac{\beta}{1-\alpha}\right)\leq \eta\leq \frac{1}{2}} \min\left\{L(\eta),\, J\left(h^{-1}\left(\frac{\beta}{1-\alpha}\right),\eta\right)+\frac{\alpha'}{1-\alpha}\right\},
\end{align*}
where we have used $\alpha\leq \alpha$. The theorem now follows by substituting $\beta$ from Corollary \ref{cor:soft_sauer}, and noting that the inequality above holds for any $0\leq \alpha \leq h^{-1}(R_1)$.

\section{Proof of Lemma \ref{lem:soft_sauer}}\label{sec:proof_soft_sauer}

Let $\m{F}$ be a family of subsets on $[n]$. We start by applying the shifting argument introduced in \cite{Alon83}, to construct another family $\m{G}$ of the same cardinality, such that if $S$ is $k$-shattered by $\m{G}$ then it is also $k$-shattered by $\m{F}$. Furthermore, $\m{G}$ will be \textit{monotone}, i.e., will have the property that if $G\in\m{G}$ then all subsets of $G$ are in $\m{G}$.

Set $\m{G}=\m{F}$. If $\m{G}$ is already monotone, we are done. Otherwise there exists some $i\in[n]$ such that  the set $\wt{\m{G}}_i = \{G\in\m{G} : i\in G,\,G\setminus \{i\}\not\in\m{G}\}$ is not empty. Update $\m{G}$ according to the rule:
\begin{align}\label{eq:update}
\m{G} \leftarrow \left(\m{G}\setminus \wt{\m{G}}_i\right) \cup \left(\wt{\m{G}}_i - i\right)
\end{align}
where $\wt{\m{G}}_i - i$ is the family of subsets obtained from $\wt{\m{G}}_i$ by removing the element $i$ from each subset. The process continues until $\m{G}$ is monotone, and is clearly guaranteed to terminate in finite time. By construction, $|\m{G}|=|\m{F}|$.

We now show that if $S$ is $k$-shattered by $\m{G}$ then it is also $k$-shattered by $\m{F}$. Let $\m{G}'$ be the family of subsets before the operation~\eqref{eq:update} on some element $i$, and let $\m{G}$ be the family obtained after that operation. Suppose $S$ is $k$-shattered by $\m{G}$. It now suffices to show that $S$ is also $k$-shattered by $\m{G}'$. If $i\not\in S$ then clearly $P_S^+(\m{G}) = P_S^+(\m{G}')$, hence this does not affect the $k$-shatterdness of $S$. Suppose $i\in S$, and let  $\m{G}_i  =\{G\in\m{G} : i\in G\}$. Then $\m{G}_i\subseteq \m{G}'$ since the update rule~\eqref{eq:update} does not add elements to subsets. Since $\m{G}$ $k$-shatters $S$, then every subset of $S$ that contains $i$ has multiplicity at least $k$ in $P_S^+(\m{G}_i)\subseteq P_S^+(\m{G}')$. Recalling that $\m{G}_i\subseteq \m{G}\cap\m{G}'$, we have that $\m{G}_i - i \subseteq \m{G}'$ since otherwise some replacement would have occurred in~\eqref{eq:update}. Since $\m{G}$ $k$-shatters $S$, then every subset of $S$ that does not contain $i$ has multiplicity at least $k$ in $P_S^+(\m{G}_i - i)\subseteq P_S^+(\m{G}')$.


The Lemma now follows directly from the next proposition.

\begin{proposition}
  If $\m{G}$ is a monotone family of subsets of $[n]$ with the property that no subset of cardinality $d$ is $k$-shattered by $\m{G}$, then
  \begin{align*}
     |\m{G}|\leq \sum_{t=1}^{t^*}{n\choose t}  + {n\choose t^*}\sum_{t=t^*+1}^{n}\frac{{t^*\choose d}}{{t \choose d}}
  \end{align*}
where $t^*$ is the smallest integer $t$ satisfying ${n-d \choose t-d} \geq k$ if such an integer exists, and $t^*=n$ otherwise.
\end{proposition}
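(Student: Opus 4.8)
The plan is to use monotonicity to collapse the notion of $k$-shatterdness to a simple containment count, and then run a double-counting argument on incidences between $d$-sets and members of $\m{G}$. The first and central step is the following observation: if $\m{G}$ is monotone, then a $d$-element set $S$ is $k$-shattered by $\m{G}$ \emph{if and only if} at least $k$ members of $\m{G}$ contain $S$. One direction holds for any family, since the multiplicity of $S$ itself inside $P_S^+(\m{G})$ is exactly $|\{G\in\m{G}:S\subseteq G\}|$. For the converse, I would note that for any $T\subseteq S$ the multiplicity of $T$ in $P_S^+(\m{G})$ equals $|\{G\in\m{G}:G\cap S=T\}|$, and that the map $G\mapsto G\setminus(S\setminus T)$ injects $\{G\in\m{G}:S\subseteq G\}$ into $\{G\in\m{G}:G\cap S=T\}$ because $\m{G}$ is down-closed. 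Hence the multiplicity of every $T\subseteq S$ is at least that of $S$, so the minimal multiplicity over subsets of $S$ equals $|\{G\in\m{G}:S\subseteq G\}|$, and $k$-shatterdness of $S$ is equivalent to this count being at least $k$.

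Given this, the hypothesis that no $d$-subset is $k$-shattered translates precisely to: every $d$-subset of $[n]$ is contained in at most $k-1$ members of $\m{G}$. I would then double count incidences: $\sum_{G\in\m{G}}\binom{|G|}{d}=\sum_{|S|=d}|\{G\in\m{G}:S\subseteq G\}|\le(k-1)\binom{n}{d}<k\binom{n}{d}$. Writing $N_t$ for the number of members of $\m{G}$ of size $t$, this reads $\sum_t N_t\binom{t}{d}<k\binom{n}{d}$, which in particular forces $N_t\binom{t}{d}<k\binom{n}{d}$ for each individual $t$.

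To conclude, I split $|\m{G}|=\sum_t N_t$ at the threshold $t^*$ (noting $t^*\ge d$, so $\binom{t}{d}\ge1$ for $t\ge t^*+1$). For $t\le t^*$ I keep only the trivial bound $N_t\le\binom{n}{t}$. For $t\ge t^*+1$ I combine $N_t\binom{t}{d}<k\binom{n}{d}$ with the defining inequality $\binom{n-d}{t^*-d}\ge k$ of $t^*$ and the identity $\binom{n}{t^*}\binom{t^*}{d}=\binom{n}{d}\binom{n-d}{t^*-d}$, obtaining $N_t\binom{t}{d}<\binom{n}{t^*}\binom{t^*}{d}$, i.e. $N_t\le\binom{n}{t^*}\binom{t^*}{d}/\binom{t}{d}$. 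Summing the two ranges yields $|\m{G}|\le\sum_{t\le t^*}\binom{n}{t}+\binom{n}{t^*}\sum_{t>t^*}\binom{t^*}{d}/\binom{t}{d}$, which is the claimed bound. The case $t^*=n$ is vacuous, since then the second sum is empty and the statement reduces to the trivial $|\m{G}|\le2^n$.

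I expect the main—and essentially the only nontrivial—obstacle to be the first step: recognizing that monotonicity reduces the $k$-shatterdness condition, which a priori requires controlling the multiplicity of every subset of $S$, to the single clean condition that $S$ lies in at least $k$ members of $\m{G}$. Everything afterwards is a double count together with the standard identity $\binom{n}{t}\binom{t}{d}=\binom{n}{d}\binom{n-d}{t-d}$, and the only bookkeeping care needed is the (harmless, for exponential purposes) low-order terms and the degenerate $t^*=n$ case.
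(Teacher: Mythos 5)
Your proposal is correct and follows essentially the same route as the paper: both arguments hinge on the observation that, for a monotone family, a $d$-set contained in $m$ members of $\m{G}$ is $m$-shattered (you make the injection $G\mapsto G\setminus(S\setminus T)$ explicit, which the paper leaves implicit), and both then double-count incidences between $d$-sets and members of $\m{G}$ to get $N_t\binom{t}{d}<k\binom{n}{d}$ before splitting the sum at $t^*$ via the identity $\binom{n}{t}\binom{t}{d}=\binom{n}{d}\binom{n-d}{t-d}$. The only cosmetic difference is that the paper phrases the count as a per-level pigeonhole over $\m{G}_t$ while you sum over all of $\m{G}$ first and then restrict to a single level.
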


\begin{proof}
Let $\m{G}_t$ denote the family of all subsets in $\m{G}$ with cardinality $t$. For $t\geq d$, every $G\in\m{G}_t$ has exactly ${t \choose d}$ subsets of cardinality $d$. There is a total of ${n \choose d}$ subsets of cardinality $d$. Hence by a simple counting argument there must exist at least one subset $S$ of cardinality $d$, that is a subset of no less than $|\m{G}_t|{t\choose d}/{n \choose d}$ subsets in $\m{G}_t$. Recalling that $\m{G}$ is monotone, this implies that $S$ is $|\m{G}_t|{t\choose d}/{n \choose d}$-shattered by $\m{G}$. By our assumption, it must be that
\begin{align*}
  \frac{{t\choose d}|\m{G}_t|}{{n \choose d}} < k,\quad t=d,\ldots,n
\end{align*}
On the other hand, $|\m{G}_t|\leq {n \choose t}$, and therefore
\begin{align*}
  |\m{G}_t| \leq \min\left\{{n \choose t}, \frac{{n\choose d}k}{{t \choose d}}\right\}, \quad t=d,\ldots,n
\end{align*}
Summing over $t$ we get
\begin{align}\label{eq:Gbound}
  |\m{G}| &= \sum_{t=1}^n|\m{G}_t| \leq  \sum_{t=1}^{d-1}{n\choose t} + \sum_{t=d}^n\min\left\{{n \choose t}, \frac{{n\choose d}k}{{t \choose d}}\right\}
\end{align}
Let $t^*$ be the smallest integer $t$ such that ${n \choose t} \geq \frac{{n\choose d}k}{{t \choose d}}$ if such an integer exists. If no such integer $t$ exists, set $t^* = n$. Then
\begin{align*}
  |\m{G}| \leq \sum_{t=1}^{t^*}{n\choose t}  + \sum_{t=t^*+1}^{n}\frac{{n\choose d}k}{{t^*\choose d}}\cdot\frac{{t^*\choose d}}{{t \choose d}} \leq \sum_{t=1}^{t^*}{n\choose t}  + {n\choose t^*}\sum_{t=t^*+1}^{n}\frac{{t^*\choose d}}{{t \choose d}} \end{align*}

To complete the proof, note that for any $d\leq t\leq n$ we have ${n\choose t}{t\choose d} = {n\choose d}{n-d\choose t-d}$, hence $t^*$ is the smallest integer $t$ satisfying ${n-d\choose t-d} \geq k$ if such an integer exists, and otherwise $t^*=n$.
\end{proof}

\section{Proof of Lemma \ref{lem:sw}}\label{sec:proof_sw}
We will need the following basic definitions and properties of entropy \cite{Cover}. The entropy of $X\sim\textrm{Uniform}([m])$ is $H(X) = \log{m}$. If $P = (p_0,p_1,\ldots,p_k)$ then the grouping rule for entropy states that $H(P) = H(p_0,\ldots,p_{k-2},p_{k-1}+p_k) + (p_{k-1}+p_k)h\left(\frac{p_{k-1}}{p_{k-1}+p_k}\right)$. In particular, if $P = (p_0,p_1,p_2)$, this implies that $H(P)\leq h(p_0) + 1-p_0$ with equality if and only if $p_1=p_2$. For two jointly distributed random variables $X,Y$, let $H(X|Y=y)$ denote the entropy of the distribution $P_{X|Y=y}$, and let $H(X|Y)$ be its expectation w.r.t. $P_Y$. The chain rule for entropies states that $H(X,Y) = H(Y) +H(X|Y)$. In addition, $H(X|Y) \leq H(X)$, i.e., conditioning reduces entropy. The latter two properties imply the sub-additivity of entropy, i.e., $H(X,Y) \leq H(X)+H(Y)$. Finally, note that the binary entropy function $h(\cdot)$ is symmetric around $\tfrac{1}{2}$.

Let $\m{V} = \{\m{F}_{1,i},\m{F}_{2,i}\}_{i=1}^{2^{nr_0}}$ be a \textit{multiset-union-free system}, where each $\m{F}_{1,i}$ (resp.  $\m{F}_{2,i}$) is a family of subsets of $[n]$ with fixed cardinality $|\m{F}_{1,i}| = 2^{nr_1}$ (resp. $|\m{F}_{2,i}| = 2^{nr_2}$). Let $V\sim\textrm{Uniform}([2^{nr_0}])$ be an index in the system, chosen uniformly at random. Let $\b{X}_1\sim \textrm{Uniform}(\m{C}_{1,V})$  and $\b{X}_2\sim \textrm{Uniform}(\m{C}_{2,V})$, where $\m{C}_{j,V}$ is the set of characteristic vectors corresponding to $\m{F}_{j,V}$. Note that this construction induces a joint distribution  $P_{V,{\bf X}_1,{\bf X}_2} = P_VP_{{\bf X}_1|V}P_{{\bf X}_2|V}$. Let $Q\sim \textrm{Uniform}([n])$ be a random coordinate of the characteristic vectors, mutually  independent of $({\bf X}_1, {\bf X}_2 ,V)$, and define the binary random variables $X_1 = X_{1,Q}$ and $X_2 = X_{2,Q}$.

By the multiset-union-free assumption, we have that $\b{X}_1+\b{X}_2$ is uniformly distributed over a set of cardinality $2^{n(r_0+r_1+r_2)}$. Using that and the sub-additivity of entropy, we have that
\begin{align}\label{eq:r012}
\nonumber  n(r_0+r_1+r_2) &= H({\bf X}_1+{\bf X}_2) \leq \sum_{q=1}^nH(X_{1,q}+X_{2,q})
= n\mathbb{E}H(X_{1,Q}+X_{2,Q})\\ &= nH(X_1+X_2|Q) \leq nH(X_1+X_2)
\end{align}
where the last inequality follows since conditioning reduces entropy. Similarly, we have that $n(r_1+r_2) = H({\bf X}_1+{ \bf X}_2|V=v)$ for any $V=v$, and hence
\begin{align}\label{eq:r12}
\nonumber  n(r_1+r_2) &= 2^{-nr_0}\sum_{v=1}^{2^{nr_0}}H({\bf X}_1+{\bf X}_2|V=v) = H({\bf X}_1+{\bf X}_2|V) \\ &\leq \sum_{q=1}^{n}H(X_{1,q}+X_{2,q}|V) = nH(X_2+X_1|V,Q)
\end{align}

Finally, we also have that $nr_1 = H({\bf X}_1|V=v)$ for any $V=v$ and hence
\begin{align}\label{eq:r1}
\nonumber  nr_1 &= 2^{-nr_0}\sum_{v=1}^{2^{nr_0}}H({\bf X}_1|V=v) = H({\bf X}_1|V) \\ &\leq \sum_{q=1}^{n}H(X_{1,q}|V) = nH(X_1|V,Q).
\end{align}

Combining~\eqref{eq:r012},~\eqref{eq:r12} and~\eqref{eq:r1}, and defining $U = (V,Q)$, we obtain the following.
\begin{proposition}\label{prop:SW_region}
If $(r_0,r_1,r_2)$ is admissible, then there exists $U\sim P_U$ of finite cardinality, and conditional binary distributions $P_{X_1|U}$ and $P_{X_2|U}$, such that
\begin{align}
\nonumber  r_0+r_1+r_2 &\leq H(X_1+X_2) \\
\nonumber r_1+r_2 &\leq H(X_1+X_2|U) \\
r_1 &\leq H(X_1|U)\label{eq:region}
\end{align}
where $P_{U,X_1,X_2} = P_UP_{X_1|U}P_{X_2|U}$.
\end{proposition}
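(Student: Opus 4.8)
The statement is an immediate consequence of the entropy chains~\eqref{eq:r012},~\eqref{eq:r12},~\eqref{eq:r1} derived just above it, together with a routine cardinality‑reduction step that puts $U$ into the advertised clean form. The plan is as follows. Fix an admissible triplet $(r_0,r_1,r_2)$ and, for each large $n$ in the defining sequence, a multiset‑union‑free system $\m{V}=\{\m{F}_{1,i},\m{F}_{2,i}\}_{i=1}^{M_0}$ with $M_\ell=2^{n(r_\ell+o(1))}$. Build $V\sim\textrm{Uniform}([M_0])$, $\mathbf{X}_1\sim\textrm{Uniform}(\m{C}_{1,V})$, $\mathbf{X}_2\sim\textrm{Uniform}(\m{C}_{2,V})$, an independent $Q\sim\textrm{Uniform}([n])$, and $X_1=X_{1,Q}$, $X_2=X_{2,Q}$, exactly as in the preamble to the Proposition, and set $U=(V,Q)$.

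The disjointness built into the definition of a multiset‑union‑free system forces $\mathbf{X}_1+\mathbf{X}_2$ to be uniform over $M_0M_1M_2$ distinct sums, so $H(\mathbf{X}_1+\mathbf{X}_2)=n(r_0+r_1+r_2)+o(n)$; conditioned on a fixed index $V=v$ the pair is merely multiset‑union‑free, giving $H(\mathbf{X}_1+\mathbf{X}_2\mid V=v)=n(r_1+r_2)+o(n)$ and $H(\mathbf{X}_1\mid V=v)=nr_1+o(n)$ for every $v$. Applying subadditivity of entropy coordinatewise, using $X_j=X_{j,Q}$ with $Q$ uniform and independent, and ``conditioning reduces entropy'' reproduces~\eqref{eq:r012},~\eqref{eq:r12},~\eqref{eq:r1}; dividing by $n$ and recalling $U=(V,Q)$ yields the three inequalities of~\eqref{eq:region} up to an $o(1)$ term. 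The product form $P_{U,X_1,X_2}=P_UP_{X_1|U}P_{X_2|U}$ is automatic, since $\mathbf{X}_1$ and $\mathbf{X}_2$ are conditionally independent given $V$ by construction and $Q$ is independent of everything else. At this stage $|U|=nM_0$ is finite but grows with $n$.

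To obtain a single $U$ of finite cardinality satisfying the inequalities with no error term, I would invoke the support lemma (a Carath\'eodory‑type cardinality bound): for each $n$, replace $U$ by a random variable supported on a universally bounded number of values that preserves the law of $X_1+X_2$ (hence $H(X_1+X_2)$) together with the two conditional entropies $H(X_1+X_2\mid U)$ and $H(X_1\mid U)$ --- finitely many real functionals --- while keeping the kernels $P_{X_1|U=u},P_{X_2|U=u}$, so the product structure survives. With the alphabet of $(U,X_1,X_2)$ now fixed, the joint laws lie in a compact set; extracting a convergent subsequence as $n\to\infty$ and letting $P_{U,X_1,X_2}$ be the limit, continuity of entropy on the simplex upgrades the three bounds to hold exactly, and the limit still factors as $P_UP_{X_1|U}P_{X_2|U}$ because conditional independence $X_1\perp X_2\mid U$ is a closed condition.

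I expect the only delicate point to be this last step: one must apply the support lemma precisely to the functionals appearing in~\eqref{eq:region} (rather than to the individual coordinate contributions of~\eqref{eq:r012}), and verify that the subsequential limit inherits the Markov structure. Everything preceding it is a transcription of the entropy manipulations already displayed.
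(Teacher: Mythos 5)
Your proposal is correct and takes essentially the same route as the paper: the paper likewise builds $V,\mathbf{X}_1,\mathbf{X}_2,Q$, sets $U=(V,Q)$, and reads the three inequalities of~\eqref{eq:region} directly off the entropy chains~\eqref{eq:r012},~\eqref{eq:r12},~\eqref{eq:r1}. The extra support-lemma and compactness step you outline goes beyond what the paper does --- there $U=(V,Q)$ is already of finite cardinality for each $n$ and the $o(1)$ terms are absorbed into the asymptotic convention, with the Carath\'eodory cardinality reduction mentioned only in a subsequent remark --- but it is a sound way to make the statement exact.
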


\begin{remark}
  The above proposition is a special case of a general result of Slepian and Wolf \cite{SW73}.
\end{remark}

Following the proposition above, characterizing the set of all possible entropy triplets $(H(X_1+X_2),H(X_1+X_2|U),H(X_1|U))$ will result in necessary conditions for admissibility of triplets $(r_0,r_1,r_2)$. More precisely, it is our goal to characterize the set of \textit{extremal entropy triplets}, namely those entropy triplets that are Pareto optimal. We refer to the distributions $P_U,P_{X_1|U},P_{X_2|U}$ that achieve these extremal entropy triplets as \textit{extremal distributions}.

\begin{remark}
Using the Carath\'{e}odory's Theorem based technique initiated in \cite{AK75,WZ76,W82}, it can be shown that it suffices to consider $U$ of cardinality at most $3$. While this significantly reduces the dimension of the space of extremal distributions, the remaining number of parameters still renders a brute-force search prohibitive. Instead, in what follows we bound the extremal entropy triplets analytically.
\end{remark}

First, note that choosing $U=\emptyset$ and $X_1,X_2$ uniformly random, yields $H(X_1|U)=1$ and $H(X_1+X_2|U) = H(X_1+X_2) = \tfrac{3}{2}$, with $P_{X_1+X_2}(1) = \tfrac{1}{2}$. By the grouping property of entropy, if $P_{X_1+X_2}(1) > \tfrac{1}{2}$ then $H(X_1+X_2|U)\leq H(X_1+X_2) <  \tfrac{3}{2}$, hence any extremal distribution must satisfy $P_{X_1+X_2}(1) \leq \tfrac{1}{2}$. Furthermore, we show the following.
\begin{lemma}
Any extremal entropy triplet can be achieved by an extremal distribution inducing a $P_{X_1,X_2}$ that can be described by
\begin{align}\label{eq:dsbs}
  X_1\sim\textrm{Bern}(\tfrac{1}{2}), \; X_2 = X_1\oplus Z, \; Z \sim\textrm{Bern}(\eta)
\end{align}
for some $\eta\in[0,\tfrac{1}{2}]$, where $X_1$ and $Z$ are independent.
\end{lemma}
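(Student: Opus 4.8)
The plan is to start from an arbitrary extremal distribution $P_U, P_{X_1|U}, P_{X_2|U}$ achieving a given extremal triplet $(H(X_1+X_2), H(X_1+X_2|U), H(X_1|U))$, and to modify it in two stages so that the induced $P_{X_1,X_2}$ becomes a doubly-symmetric binary source as in~\eqref{eq:dsbs}, without decreasing any of the three entropy functionals. Since the triplet is extremal (Pareto optimal), any such modification that does not decrease any coordinate must in fact leave all three coordinates unchanged, so the modified distribution is also extremal and achieves the same triplet. The first stage is to symmetrize $X_1$: replace $U$ by $U' = (U, B)$ with $B\sim\textrm{Bern}(\tfrac12)$ independent of everything, and let $(X_1', X_2') = (X_1, X_2)$ if $B=0$ and $(X_1', X_2') = (X_1\oplus 1, X_2\oplus 1)$ if $B=1$. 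Conditioning on $U'$ this is just a relabeling, so $H(X_1'|U') = H(X_1|U)$ and $H(X_1'+X_2'|U') = H(X_1+X_2|U)$; and the sum $X_1'+X_2'$ has the same distribution as $X_1+X_2$ up to the symmetry $s\mapsto 2-s$, which preserves entropy, so $H(X_1'+X_2') = H(X_1+X_2)$. After this step $X_1$ is marginally $\textrm{Bern}(\tfrac12)$. (One has to double-check that the ordering of families — i.e. which is $X_1$ — is respected by the flip, but the flip is applied to both coordinates simultaneously so the product structure $P_{U'}P_{X_1'|U'}P_{X_2'|U'}$ is preserved.)

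The second stage is to symmetrize $X_2$ given $X_1$: having arranged $X_1\sim\textrm{Bern}(\tfrac12)$, we want the conditional law of $X_2$ given $X_1=0$ to equal the conditional law of $X_2\oplus 1$ given $X_1=1$, which is exactly the statement that $X_2 = X_1\oplus Z$ with $Z$ independent of $X_1$. To force this, enlarge $U$ once more by an independent fair bit that, when it is $1$, applies the transformation $(X_1, X_2)\mapsto (X_1\oplus 1, X_2)$ — i.e. flip only the first coordinate — and otherwise does nothing. Conditioning on the enlarged $U$, $H(X_1|U)$ is unchanged (a relabeling of a binary variable) and $H(X_1+X_2|U)$ changes only by the symmetrization $s\mapsto 2-s$ on each branch... here I would instead use the cleaner "mix-and-flip" device: the flip $(X_1,X_2)\mapsto(\overline{X_1},\overline{X_2})$ from stage one already makes the pair $(X_1,X_2)$ invariant under complementing both coordinates, and one checks that combined with a further averaging over a fair bit that complements $X_1$ alone, the resulting $P_{X_1,X_2}$ is invariant under the Klein four-group of coordinate complementations; the unique such distribution with $X_1\sim\textrm{Bern}(\tfrac12)$ and with $P_{X_1+X_2}(1)\leq\tfrac12$ (which we already know holds for extremal distributions) is precisely the DSBS~\eqref{eq:dsbs} with parameter $\eta = P_{X_2|X_1}(1|0)\in[0,\tfrac12]$. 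The key point is that every symmetrizing bit is adjoined to $U$, so $H(X_1|U)$ and $H(X_1+X_2|U)$ are computed per-branch and are merely relabeled, hence unchanged, while $H(X_1+X_2)$ — computed from the marginal of the sum — is unchanged because each adjoined transformation induces a measure-preserving involution on the support $\{0,1,2\}$ of the sum. The range $\eta\in[0,\tfrac12]$ rather than $[0,1]$ follows because $\eta>\tfrac12$ would give $P_{X_1+X_2}(1) = \eta > \tfrac12$, contradicting the extremality constraint established just before the lemma.

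The main obstacle — and the place where care is genuinely needed — is the bookkeeping that each symmetrization step genuinely (i) preserves the Markov/product structure $P_{U,X_1,X_2} = P_U P_{X_1|U} P_{X_2|U}$ required by Proposition~\ref{prop:SW_region}, and (ii) does not increase $H(X_1+X_2)$, since that quantity depends only on the unconditional sum and is the one not protected by the "adjoin to $U$" trick. Point (i) holds because each transformation is a deterministic function of the new auxiliary bit applied coordinatewise, so conditioned on the enlarged auxiliary the two coordinates remain independent; point (ii) holds because complementing $X_1$ alone, $X_2$ alone, or both, each sends the sum distribution $(P(0),P(1),P(2))$ to $(P(1),P(0)+? \ldots)$ — more precisely complementing both sends it to $(P(2),P(1),P(0))$, and one verifies directly that averaging the original sum distribution with its image under the relevant involution cannot raise entropy unless it was already invariant, and the latter case is exactly what we want. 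Once these two facts are in hand, extremality closes the argument: the symmetrized distribution achieves a triplet coordinatewise at least as large as the original extremal one, hence equal to it, hence it is itself extremal and of the desired form.
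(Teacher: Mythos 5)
Your first stage is exactly the paper's argument: the paper doubles the support of $U$ (to $[m]\cup(-[m])$ with halved weights) and complements both conditional Bernoulli parameters on the new half, which is precisely your ``adjoin a fair bit that complements both coordinates.'' The one thing you missed is that this single step already finishes the proof. After it, the joint law satisfies $P_{X_1,X_2}(0,0)=P_{X_1,X_2}(1,1)$ and $P_{X_1,X_2}(0,1)=P_{X_1,X_2}(1,0)$, so $X_1\sim\textrm{Bern}(\tfrac12)$ and $\Pr(X_2\neq X_1\mid X_1=0)=\Pr(X_2\neq X_1\mid X_1=1)$, i.e.\ $Z=X_1\oplus X_2$ is already independent of $X_1$ and \eqref{eq:dsbs} holds with $\eta=\Pr(X_1\neq X_2)=P_{X_1+X_2}(1)\leq\tfrac12$. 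Your stage 2 is introduced to force an independence that is already present.

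Worse, stage 2 as described is genuinely broken, in two ways. First, complementing $X_1$ alone does not act on the sum by $s\mapsto 2-s$: it sends the conditional sum law $\bigl(a_ub_u,\; a_u\star b_u,\;(1-a_u)(1-b_u)\bigr)$ to $\bigl((1-a_u)b_u,\; 1-a_u\star b_u,\; a_u(1-b_u)\bigr)$, which in general has a different entropy (try $a_u=b_u=\tfrac14$), so $H(X_1+X_2\mid U)$ is not preserved branch-by-branch and the Pareto argument cannot close the gap, since that coordinate may strictly decrease. Second, averaging over this flip sends any joint with $P(0,0)=P(1,1)$ and $P(0,1)=P(1,0)$ to the uniform product distribution; relatedly, invariance under the full Klein four-group of complementations characterizes the independent uniform pair ($\eta=\tfrac12$), not the general DSBS, so your claimed uniqueness statement is false and carrying out stage 2 would destroy the parameter $\eta$ altogether. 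A minor further point on stage 1: the new sum law is the mixture of the old one with its reflection under $s\mapsto2-s$, not a relabeling, so $H(X_1'+X_2')=H(X_1+X_2)$ is not immediate; what holds is ``$\geq$'' (by the grouping rule, as in the paper, or by concavity), and extremality then forces equality --- you state this correctly at the end, apart from the sign slip ``cannot raise'' where you mean ``cannot lower.'' Deleting stage 2 and keeping the (corrected) stage 1 recovers the paper's proof.
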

\begin{proof}
Consider any choice of $P_U$, $P_{X_1|U}$ and $P_{X_2|U}$, and without loss of generality assume the support of $U$ is the set $[m]$, for some finite $m$. We write $t_u,q_u$ for the Bernoulli parameters of $X_1|U=u$ and $X_2|U=u$ respectively. We construct another distribution satisfying~\eqref{eq:dsbs} that keeps the conditional entropies constant while not decreasing $H(X_1+X_2)$.

Define an extended distribution $W$ with support $[m]\cup (-[m])$, such that $P_W(w) = \tfrac{1}{2}P_U(|w|)$. Define further $\wt{t}_w = t_w$ for $w>0$ and $\wt{t}_w = 1-t_w$ otherwise. Let $\wt{q}_w$ be defined similarly. With some abuse of notation, let $P_{X_1|W}$ and $P_{X_2|W}$ follow the Bernoulli parameters $\wt{t}_w$ and $\wt{q}_w$ respectively. We will now refer to $X_1,X_2$ under $U$ or under $W$ to mean the obvious. Note that $P_{X_1|W=w}$ and $P_{X_1|W=-w}$ are identical up to substituting the probabilities of $0$ and $1$. Similarly, $P_{X_1+X_2|W=w}$ and $P_{X_1+X_2|W=-w}$ are identical up to substituting the probability assigned to $0$ and $2$. Hence, we clearly have that $H(X_1|W)=H(X_1|U)$ and $H(X_1+X_2|W) = H(X_1+X_2|U)$. For the same reason, $P_{X_1+X_2}(1)$ under $U$ and $P_{X_1+X_2}(1)$ under $W$ are the same. Furthermore, under $W$ we have that $P_{X_1+X_2}(0) = P_{X_1+X_2}(2)$, and so by the grouping rule for entropy we conclude that $H(X_1+X_2)$ under $W$ is not smaller than $H(X_1+X_2)$ under $U$. Moreover, from symmetry we have that $P_{X_1,X_2}(0,1) = P_{X_1,X_2}(1,0)$ under $W$. We can therefore think of $X_1,X_2$ under $W$ as being generated by~\eqref{eq:dsbs} for $\eta = \Pr(X_1\neq X_2) = P_{X_1+X_2}(1)$.

\end{proof}

We now restrict our attention to distributions of the form~\eqref{eq:dsbs}. Fix some $\eta$, and note that
\begin{align}\label{eq:hx1x2}
H(X_1+X_2) = h(\eta) + 1-\eta = L(\eta).
\end{align}
Our goal is therefore to maximize $H(X_1+X_2|U)$ subject to $H(X_1|U)\geq r_1$, over all $P_U,P_{X_1|U},P_{X_2|U}$ for which $P_{X_1,X_2}$ is consistent with~\eqref{eq:dsbs} and our $\eta$.

Define
\begin{align*}
a_u &= \Pr(X_1=0|U=u) \\ b_u &=\Pr(X_2=0|U=u)
\end{align*}
and the random variables $a\triangleq a_U$ and  $b\triangleq b_U$. Note that by definition
\begin{align}\label{eq:ent_ab}
  H(X_1|U) = \Expt h(a), \; H(X_2|U) = \Expt h(b)
\end{align}
Clearly
\begin{align*}
  H(X_1+X_2|U=u) = H\big{(}a_ub_u, (1-a_u)(1-b_u), a_u\star b_u\big{)}
\end{align*}
Moreover, by the grouping rule for entropy we can also write
\begin{align*}
  H(X_1,X_2|U=u) &= H\big{(}a_ub_u, (1-a_u)(1-b_u), (1-a_u)b_u, a_u(1-b_u)\big{)} \\
& = H\big{(}a_ub_u, (1-a_u)(1-b_u), a_u\star b_u\big{)} + (a_u\star b_u) h\left(\frac{a_u(1-b_u)}{a_u\star b_u}\right)
\end{align*}
Hence, noting that also $H(X_1,X_2|U=u) = h(a_u) + h(b_u)$ we obtain
\begin{align*}
  H(X_1+X_2|U=u) = F(a_u,b_u)
\end{align*}
where
\begin{align*}
F(y,z) &\triangleq  h(y) + h(z) - (y\star z) \cdot h\left(\frac{y(1-z)}{y\star z}\right)
\end{align*}

Our task is now reduced to upper bounding
\begin{align}\label{eq:condEnt}
\Expt F(a,b) = H(X_1+X_2|U)
\end{align}
subject to the constraints
\begin{align}
\nonumber  \Expt\,a &= \Pr(X_1=0) = \frac{1}{2} \\
\nonumber  \Expt\,b &= \Pr(X_2=0) = \frac{1}{2} \\
\nonumber  \Expt\,ab &= \Pr(X_1=0, X_2=0) = \frac{1}{2}(1-\eta)\\
\Expt h(a) &\geq r_1
 \label{eq:wyner_const}
\end{align}
In \cite{Wyner}, Wyner has upper bounded $\Expt h(a)+\Expt h(b)$ subject to the first three constraints. We extend his technique to account for the additional term and the additional entropy constraint.

The following proposition can be verified via standard analysis.
\begin{proposition}\label{prop:F}
  $F(y,z)$ is concave in the pair $(y,z)$. In addition $F(y,z) = F(z,y)$.
\end{proposition}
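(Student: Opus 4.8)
The plan is to first dispatch the symmetry and then treat the concavity via a change of variables that makes $F$ transparently an entropy of a distribution whose parameters are affine in the new coordinates. For symmetry, note that $y\star z = z\star y$, and by the grouping rule applied to the four-atom distribution $(yz,(1-y)(1-z),(1-y)z,y(1-z))$ one gets $H(X_1,X_2|U=u) = h(y)+h(z)$ regardless of the order; since $H(X_1,X_2\mid U=u)$ is manifestly symmetric in $(y,z)$ and equals $F(y,z) + (y\star z)h\!\left(\frac{y(1-z)}{y\star z}\right)$ with the bracketed correction term also symmetric (it is $(y\star z)$ times the conditional entropy of which of the two ``disagree'' atoms occurred), the identity $F(y,z)=F(z,y)$ follows. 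Alternatively, and more cleanly, I would simply recall from the derivation above that $F(y,z) = H(X_1+X_2\mid U=u)$ when $X_1\mid U=u \sim \textrm{Bern}(1-y)$ and $X_2\mid U=u\sim\textrm{Bern}(1-z)$ independently, and $X_1+X_2$ is symmetric in swapping $X_1\leftrightarrow X_2$.

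For concavity, the key observation is that $F(y,z)$ is the entropy of the three-atom distribution
\begin{align*}
\pi(y,z) = \big(yz,\;(1-y)(1-z),\;y\star z\big) = \big(yz,\;1-y-z+yz,\;y+z-2yz\big).
\end{align*}
These three coordinates are \emph{not} affine in $(y,z)$ because of the $yz$ terms, so $H(\pi(y,z))$ is not automatically concave. The standard fix is to reparametrize by the joint law directly: set $p_{00}=yz$, $p_{11}=(1-y)(1-z)$, and recall that the joint distribution of $(X_1,X_2)$ given $U=u$ is determined by $(p_{00},p_{01},p_{10},p_{11})$ with $p_{01}=z-p_{00}$ etc. But here $(y,z)$ are the \emph{only} free parameters; the product structure forces a particular one-parameter-per-variable family. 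So instead I would verify concavity by a direct Hessian computation: writing $F(y,z)$ out using $h(y)+h(z) - (y\star z)h\!\left(\tfrac{y(1-z)}{y\star z}\right)$ and differentiating twice, one checks that the $2\times 2$ Hessian is negative semidefinite on $(0,1)^2$. The cleanest route to avoid brute force is to use the representation $F(y,z) = H(X_1+X_2\mid U=u)$ and note that $X_1+X_2 = X_1 + X_2$ is a deterministic function; concavity of entropy does not directly apply since we are varying $(y,z)$, not mixing distributions — but we \emph{can} write $H(X_1+X_2\mid U=u)$ as $H(X_1\mid U=u) + H(X_2\mid X_1, U=u) - H(X_1\mid X_1+X_2,U=u)$ or use $H(X_1+X_2) = H(X_2) + H(X_1\mid X_1+X_2) - H(X_1\mid X_2, X_1+X_2)$; the last term vanishes, giving $F(y,z) = h(z) + H(X_1\mid X_1+X_2, U=u)$. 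This still has a product-of-variables obstruction.

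The honest assessment: the main obstacle is that the obvious ``entropy of affine images is concave'' argument does not apply because of the bilinear $yz$ terms, so the proposition really does come down to a Hessian sign check, which the authors correctly flag as ``verified via standard analysis.'' My plan would therefore be to (i) record the symmetry $F(y,z)=F(z,y)$ from the joint-entropy identity $h(y)+h(z) = F(y,z) + (y\star z)h\!\left(\tfrac{y(1-z)}{y\star z}\right)$, and (ii) compute $\partial^2 F/\partial y^2$, $\partial^2 F/\partial z^2$, and $\partial^2 F/\partial y\partial z$ explicitly, then show $F_{yy}\le 0$ and $F_{yy}F_{zz} - F_{yz}^2\ge 0$ on the open square. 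A useful simplification for (ii) is to substitute $y = \tfrac{1}{2}(1+s)$, $z=\tfrac12(1+t)$ to center the variables, or to exploit that along the diagonal $y=z$ the function reduces to $2h(y) - h(2y(1-y))$ of a single variable, reducing part of the check to a one-dimensional inequality; the off-diagonal behavior is then controlled by a continuity/perturbation argument or by direct inspection of the mixed partial. I expect the determinant-of-Hessian inequality to be the computational crux, plausibly requiring a few lines of algebra after clearing the logarithmic derivatives $h'(y) = \log\tfrac{1-y}{y}$.
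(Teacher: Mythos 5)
Your symmetry argument is fine and complete: $F(y,z)=H\big(yz,\,(1-y)(1-z),\,y\star z\big)$ is manifestly symmetric, and equivalently the correction term $(y\star z)h\!\left(\frac{y(1-z)}{y\star z}\right)$ is symmetric because its two arguments $\frac{y(1-z)}{y\star z}$ and $\frac{z(1-y)}{y\star z}$ sum to one and $h(p)=h(1-p)$. For calibration, the paper gives no proof of this proposition at all (it is dismissed as ``verified via standard analysis''), so on the symmetry half you are already ahead of the source.

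The gap is that the concavity half is diagnosed but never proved, and it is the entire content of the proposition. You correctly observe that the probabilities $p_0=yz$, $p_1=y\star z$, $p_2=(1-y)(1-z)$ are affine in each variable separately (giving coordinatewise concavity) but not jointly affine, so everything hinges on the sign of the Hessian determinant --- and then you stop. To be concrete about why this is not a formality: writing $u=(\partial_y p_0,\partial_y p_1,\partial_y p_2)=(z,\,1-2z,\,-(1-z))$ and $v=(y,\,1-2y,\,-(1-y))$, one gets (in nats) $F_{yy}=-\sum_i u_i^2/p_i$ and $F_{zz}=-\sum_i v_i^2/p_i$, but the mixed partial picks up an extra term because $\partial^2_{yz}p=(1,-2,1)\neq 0$, namely $F_{yz}=-\sum_i u_iv_i/p_i-\ln\frac{p_0p_2}{p_1^2}$. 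The determinant condition is therefore
\begin{align*}
\Big(\sum_i \tfrac{u_i^2}{p_i}\Big)\Big(\sum_i \tfrac{v_i^2}{p_i}\Big)\;\ge\;\Big(\sum_i \tfrac{u_iv_i}{p_i}+\ln\tfrac{p_0p_2}{p_1^2}\Big)^2,
\end{align*}
which does \emph{not} follow from Cauchy--Schwarz alone: one can check that $\sum_i u_iv_i/p_i\ge 1$ while $\ln(p_0p_2/p_1^2)\le-\ln 4$ and is unbounded below, so the squared quantity on the right can far exceed $\big(\sum_i u_iv_i/p_i\big)^2$ (e.g.\ at $y=0.9$, $z=0.1$ it is about $10.2$ versus $1.5$), and the inequality survives only because $AB$ blows up at the same rate. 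This balancing act is the actual proof, and your proposal does not carry it out. Two further points: your fallback of verifying concavity on the diagonal $y=z$ and extending ``by continuity/perturbation'' cannot work --- negative semidefiniteness of a Hessian on a curve says nothing about a neighborhood --- and your stated diagonal formula $2h(y)-h(2y(1-y))$ is wrong (it gives $1$ at $y=\tfrac12$ instead of $\tfrac32$); the correct reduction is $F(y,y)=2h(y)-2y(1-y)$, consistent with the expression $2(h(\gamma)+\gamma^2-\gamma)$ the paper uses in its bound on $\Expt F(a,b)$.
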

Define the random variable $\gamma = \tfrac{a+b}{2}$, and note that $\Expt \gamma = \frac{1}{2}$.
Using Proposition \ref{prop:F}, we have that
\begin{align}\label{eq:F1}
\nonumber    \Expt F(a,b)  &= \Expt \left(\frac{1}{2}F(a,b) + \frac{1}{2}F(b,a)\right) \\
\nonumber  &\leq \Expt F\left(\gamma,\gamma\right) \\
\nonumber   &= 2\Expt (h(\gamma) + \gamma^2 - \gamma) \\
& = -\frac{1}{2} + 2\Expt\left(h(\gamma) + (\gamma-\frac{1}{2})^2\right)
\end{align}
Defining $\theta = |\gamma-\tfrac{1}{2}|$ and letting $G(y) = h(\sqrt{y}+\tfrac{1}{2}) + y$ we have that
\begin{align}\label{eq:F2}
  \Expt F(a,b) \leq -\frac{1}{2} + 2\Expt G(\theta^2)
\end{align}
where we have used the symmetry of $h(\cdot)$ around $\tfrac{1}{2}$.

The following proposition can be verified via standard analysis.
\begin{proposition}
  $G(y)$ is concave and monotone decreasing over $[0,\frac{1}{4}]$.
\end{proposition}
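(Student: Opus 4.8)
The plan is to establish both claims by elementary calculus: differentiate $G$ once for monotonicity and twice for concavity, and in each case reduce to a one-variable inequality provable by the ``equal at an endpoint, compare derivatives'' method. The convenient substitution throughout is $s=2\sqrt y$, which maps $y\in(0,\tfrac14)$ onto $s\in(0,1)$ and turns $\tfrac12\pm\sqrt y$ into $\tfrac{1\pm s}{2}$. Using $h'(x)=\log\tfrac{1-x}{x}$ and the chain rule gives $G'(y)=1+\tfrac{1}{2\sqrt y}\log\tfrac{1-2\sqrt y}{1+2\sqrt y}$, whose second term is negative on $(0,\tfrac14)$; monotonicity thus amounts to showing that term is at most $-1$, which after clearing the positive factor $2\sqrt y$ and passing to natural logarithms is exactly $\ln\tfrac{1+s}{1-s}\ge s\ln 2$ for $s\in(0,1)$. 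Both sides vanish at $s=0$, and the derivative of the left side equals $\tfrac{2}{1-s^2}\ge 2>\ln 2$ on $(0,1)$, so integrating gives the inequality (strictly, so $G$ is strictly decreasing). Since $G$ is continuous on $[0,\tfrac14]$, this yields monotonicity on the closed interval.

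For concavity, differentiating once more and substituting $s=2\sqrt y$ again (with $\tfrac{ds}{dy}=\tfrac{2}{s}$), a direct computation gives $G''(y)=\tfrac{1}{\ln 2}\cdot\tfrac{2}{s^3}\bigl(\ln\tfrac{1+s}{1-s}-\tfrac{2s}{1-s^2}\bigr)$, so $G''\le 0$ on $(0,\tfrac14)$ reduces to $\ln\tfrac{1+s}{1-s}\le\tfrac{2s}{1-s^2}$ for $s\in(0,1)$. Again both sides vanish at $s=0$; comparing derivatives, the right side has derivative $\tfrac{2(1+s^2)}{(1-s^2)^2}$ and the left side $\tfrac{2}{1-s^2}$, and $\tfrac{1+s^2}{1-s^2}\ge 1$, so the right side dominates and the inequality follows by integration. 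Twice-differentiability with $G''\le 0$ on the interior together with continuity of $G$ at the endpoints gives concavity on $[0,\tfrac14]$.

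I do not expect any genuine obstacle: the whole argument is routine one-variable analysis. The only points needing a little care are the bookkeeping of the logarithm base and of the chain-rule factors introduced by the $\sqrt y$ substitution (in particular the powers of $s^{-1}$ that appear after differentiating), and the harmless endpoint behaviour of $G'$ (it tends to $-1$ as $y\to 0^+$ and to $-\infty$ as $y\to\tfrac14^-$), which does not interfere with either conclusion, since a function continuous on a closed interval whose derivative is nonpositive, respectively nonincreasing, on the open interior is decreasing, respectively concave, on the whole interval.
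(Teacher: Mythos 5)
Your proof is correct and is precisely the ``standard analysis'' verification that the paper leaves to the reader (it gives no proof of this proposition); the substitution $s=2\sqrt{y}$, the resulting inequalities $\ln\tfrac{1+s}{1-s}\ge s\ln 2$ and $\ln\tfrac{1+s}{1-s}\le\tfrac{2s}{1-s^2}$, and the derivative comparisons all check out. One harmless slip in a parenthetical: since $h$ is in base $2$, $G'(y)\to 1-\tfrac{2}{\ln 2}\approx -1.885$ as $y\to 0^+$, not $-1$; this does not affect the argument.
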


Using~\eqref{eq:F2} and the concavity of $G(y)$ we obtain
\begin{align}\label{eq:G}
  \Expt F(a,b)  \leq -\frac{1}{2} + 2\Expt G(\theta^2) \leq -\frac{1}{2} + 2 G(\Expt \theta^2),
\end{align}
Since $G(y)$ is monotone decreasing, we can further upper bound~\eqref{eq:G} by replacing $\Expt\theta^2$ with any lower bound. To that end:
\begin{align}\label{eq:theta2}
\Expt (\theta^2) &= \Expt\left(\gamma-\frac{1}{2}\right)^2 = \Expt \gamma^2 - \Expt \gamma + \frac{1}{4} =  \frac{1}{4}(\Expt(a+b)^2  -1)
\end{align}
Hence, we need a lower bound on $\Expt(a+b)^2$, subject to the constraints~\eqref{eq:wyner_const}.
\begin{lemma}\label{lem:lambda}
Let $X,Y$ be two random variables satisfying $\Expt X^2<\infty$ and $\Expt XY = \mu \geq 0$. Assume further that $X\in\m{A}$ for some family $\m{A}$. Define
  \begin{align*}
    \lambda^* \triangleq \max \left\{\min_{X\in\m{A}} \frac{\mu}{\Expt X^2},\, 1\right\}.
  \end{align*}
Then
\begin{align*}
  \Expt(X+Y)^2 \geq \frac{(1+\lambda^*)^2}{\lambda^*}\mu
\end{align*}
\end{lemma}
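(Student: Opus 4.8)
The plan is to prove the inequality $\Expt(X+Y)^2 \geq \frac{(1+\lambda)^2}{\lambda}\mu$ for \emph{every} $\lambda > 0$ that is achievable, in the sense that $\lambda \Expt X^2 \leq \mu$ whenever $\lambda \leq 1$ is not forced; more precisely, I would first establish the clean pointwise bound and only afterwards optimize over $\lambda$. The starting observation is the trivial expansion
\begin{align*}
\Expt(X+Y)^2 = \Expt X^2 + 2\Expt XY + \Expt Y^2 = \Expt X^2 + 2\mu + \Expt Y^2.
\end{align*}
This is already a lower bound if I can control $\Expt Y^2$ from below, but in general $\Expt Y^2$ can be small, so I need to trade it off against $\Expt X^2$. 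The key trick is to introduce a free parameter $\lambda > 0$ and write, for any $\lambda$,
\begin{align*}
\Expt(X+Y)^2 = \lambda \Expt X^2 + \tfrac{1}{\lambda}\Expt Y^2 + \left[(1-\lambda)\Expt X^2 + (1-\tfrac{1}{\lambda})\Expt Y^2 + 2\mu\right],
\end{align*}
and note that $\lambda \Expt X^2 + \tfrac{1}{\lambda}\Expt Y^2 \geq 2\sqrt{\Expt X^2 \Expt Y^2} \geq 2|\Expt XY| = 2\mu$ by AM--GM and Cauchy--Schwarz. So $\Expt(X+Y)^2 \geq 2\mu$ always, but to get the sharper constant $\frac{(1+\lambda)^2}{\lambda}\mu = \left(\lambda + 2 + \tfrac1\lambda\right)\mu$ I need to be more careful about how much of $\Expt X^2$ and $\Expt Y^2$ to use.

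The cleaner route: for any $\lambda>0$,
\begin{align*}
0 \leq \Expt\left(\sqrt{\lambda}\,X - \tfrac{1}{\sqrt{\lambda}}\,Y\right)^2 = \lambda\Expt X^2 - 2\Expt XY + \tfrac{1}{\lambda}\Expt Y^2 = \lambda\Expt X^2 - 2\mu + \tfrac{1}{\lambda}\Expt Y^2,
\end{align*}
hence $\tfrac1\lambda \Expt Y^2 \geq 2\mu - \lambda \Expt X^2$, i.e. $\Expt Y^2 \geq 2\lambda\mu - \lambda^2\Expt X^2$. Substituting into $\Expt(X+Y)^2 = \Expt X^2 + 2\mu + \Expt Y^2$ gives
\begin{align*}
\Expt(X+Y)^2 \geq \Expt X^2 + 2\mu + 2\lambda\mu - \lambda^2 \Expt X^2 = (1-\lambda^2)\Expt X^2 + 2(1+\lambda)\mu.
\end{align*}
Now two cases. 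If $\lambda \geq 1$ then $1-\lambda^2 \leq 0$, and since $\Expt X^2 \geq \mu/\lambda$ would go the wrong way — so instead I use $\Expt X^2 \leq$? No: here I want to \emph{lower} bound the right side, and $1-\lambda^2<0$ multiplies $\Expt X^2$, so I need an \emph{upper} bound on $\Expt X^2$; that is precisely why the definition of $\lambda^*$ involves $\min_{X\in\m{A}} \mu/\Expt X^2$ and the truncation at $1$. Specifically, by definition of $\lambda^* = \max\{\min_{X\in\m{A}}\mu/\Expt X^2,\,1\}$, we have $\lambda^* \geq \mu/\Expt X^2$, i.e. $\Expt X^2 \geq \mu/\lambda^*$; wait, that is again a lower bound. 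Let me recheck the direction: $\lambda^* \geq \min_{X'\in\m{A}}\mu/\Expt (X')^2$, and since $X\in\m{A}$, we only get $\min_{X'\in\m{A}}\mu/\Expt(X')^2 \leq \mu/\Expt X^2$, hence $\lambda^* \geq$ something $\leq \mu/\Expt X^2$, which gives nothing directly. So the intended reading must be that $\lambda^*$ is chosen so that $\lambda^* \geq 1$ and $\lambda^*\Expt X^2 \geq \mu$; indeed $\mu/\Expt X^2 \geq \min_{X'\in\m{A}} \mu/\Expt(X')^2$ is false in general, but $\lambda^* \geq \min_{X'\in\m{A}}\mu/\Expt(X')^2$ together with... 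The point I would nail down: when $\lambda^* > 1$, it equals $\min_{X'\in\m{A}}\mu/\Expt(X')^2 \leq \mu/\Expt X^2$, so $\lambda^*\Expt X^2 \leq \mu$, i.e. $1-\frac{\mu}{\lambda^* \Expt X^2} \leq 0$. Then plug $\lambda = 1/\lambda^*$? Let me just optimize: from $\Expt(X+Y)^2 \geq (1-\lambda^2)\Expt X^2 + 2(1+\lambda)\mu$ valid for all $\lambda$, and from the constraint relating $\Expt X^2$ and $\mu$, I would pick $\lambda$ so the bound becomes $\frac{(1+\lambda)^2}{\lambda}\mu$, then optimize $\lambda$ over the admissible range (namely $\lambda \leq \lambda^*$ when the sign works out, with the boundary $\lambda = \lambda^*$ giving equality when $\Expt X^2 = \mu/\lambda^*$). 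I expect the main obstacle to be exactly this sign-bookkeeping: correctly identifying, via the definition of $\lambda^*$, when $(1-\lambda^2)$ may be replaced using $\Expt X^2 = \mu/\lambda$ (optimizing at $\lambda=\lambda^*$) versus when the constraint $\Expt X^2 \geq $ (something) must instead be used, and checking that the function $\lambda \mapsto \frac{(1+\lambda)^2}{\lambda}$ is decreasing on $(0,1]$ and increasing on $[1,\infty)$ so that the truncation $\max\{\cdot,1\}$ in the definition of $\lambda^*$ is the correct choice.

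Concretely, I would present it as: (i) derive $\Expt(X+Y)^2 \geq \Expt X^2 + 2\mu + \Expt Y^2$; (ii) derive, from expanding $\Expt(\sqrt\lambda X - \tfrac1{\sqrt\lambda}Y)^2 \geq 0$, the bound $\Expt Y^2 \geq 2\lambda \mu - \lambda^2 \Expt X^2$ for every $\lambda>0$; (iii) combine to get $\Expt(X+Y)^2 \geq (1-\lambda^2)\Expt X^2 + 2(1+\lambda)\mu$; (iv) for $\lambda = \lambda^* \geq 1$, use the defining property of $\lambda^*$ to argue $(1-(\lambda^*)^2)\Expt X^2 \geq (1-(\lambda^*)^2)\cdot \frac{\mu}{\lambda^*}$ — the inequality flipping because $1-(\lambda^*)^2 \leq 0$ and $\Expt X^2 \leq \mu/\lambda^*$ when $\lambda^*>1$, while for $\lambda^*=1$ the term vanishes — yielding $\Expt(X+Y)^2 \geq \frac{\mu}{\lambda^*} - \lambda^*\mu + 2(1+\lambda^*)\mu = \frac{(1+\lambda^*)^2}{\lambda^*}\mu$; (v) conclude. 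The only genuinely delicate point is step (iv), and in particular verifying that $\lambda^* > 1 \Rightarrow \Expt X^2 \leq \mu/\lambda^*$, which holds because in that regime $\lambda^* = \min_{X'\in\m{A}} \mu/\Expt(X')^2$ and $X \in \m{A}$ forces $\mu/\Expt X^2 \geq \lambda^*$.
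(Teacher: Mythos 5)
Your argument is correct, and your final plan (i)--(v) does close the one delicate point you kept circling: for $\lambda^*>1$ you correctly observe that $\lambda^*=\min_{X'\in\m{A}}\mu/\Expt(X')^2\leq \mu/\Expt X^2$ because $X\in\m{A}$, hence $\Expt X^2\leq \mu/\lambda^*$, which is exactly the direction needed since it multiplies the nonpositive factor $1-(\lambda^*)^2$; for $\lambda^*=1$ that term vanishes and no bound on $\Expt X^2$ is needed. The paper's proof is a close cousin but is organized differently: it decomposes $X+Y=(1+\lambda_X)X+(Y-\lambda_X X)$ with $\lambda_X=\mu/\Expt X^2$ chosen so that the cross term $\Expt X(Y-\lambda_X X)$ vanishes, yielding $\Expt(X+Y)^2\geq(1+\lambda_X)^2\Expt X^2=K(\lambda_X)\mu$ with $K(\lambda)=(1+\lambda)^2/\lambda$, and then uses the unimodality of $K$ (minimum at $\lambda=1$) together with $\lambda_X\geq\lambda^\dagger\triangleq\min_{X'\in\m{A}}\lambda_{X'}$ to replace $\lambda_X$ by $\lambda^*=\max\{\lambda^\dagger,1\}$. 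The underlying inequality is identical in both proofs --- your step (ii), optimized over $\lambda$, is just Cauchy--Schwarz in the form $\Expt Y^2\geq\mu^2/\Expt X^2$ --- so the only real difference is how the optimization over $X\in\m{A}$ is handled: the paper optimizes $\lambda$ per $X$ and then appeals to the shape of $K$, whereas you fix $\lambda=\lambda^*$ up front, dual-certificate style, and replace the monotonicity discussion of $K$ by the single sign observation $1-(\lambda^*)^2\leq 0$. Both are complete; yours is marginally more self-contained since it never needs to analyze $K(\lambda)$, while the paper's makes the equality case ($\Expt X^2=\mu/\lambda^*$, $Y=\lambda^*X$) more transparent.
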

\begin{proof}
  For any $X\in \m{A}$ define $\lambda_X \triangleq \frac{\mu}{\Expt X^2}$. For any $Y$ we can write
  \begin{align*}
    \Expt(X+Y)^2 &= \Expt (X+\lambda_XX + Y-\lambda_XX)^2 \\
&= (1+\lambda_X)^2\Expt X^2 + \Expt(Y-\lambda_XX)^2 + 2(1+\lambda_X)\Expt X(Y-\lambda_XX) \\
&\geq (1+\lambda_X)^2\Expt X^2
  \end{align*}
where the last inequality follows since $\Expt XY = \lambda_X\Expt X^2 = \mu $. Therefore,
\begin{align*}
  \Expt(X+Y)^2 \geq \frac{(1+\lambda_X)^2}{\lambda_X} \lambda_X\Expt X^2 = \frac{(1+\lambda_X)^2}{\lambda_X} \mu
\end{align*}
Note that the function $K(\lambda) = \frac{(1+\lambda)^2}{\lambda}$ has a unique minimum at $\lambda=1$. Define  $\lambda^\dagger \triangleq \min_{X\in\m{A}}\lambda_X$, and observe that $\lambda^* = \max\{\lambda^\dagger,1\}$. Hence if $\lambda^\dagger>1$ we can further lower bound the above by substituting $\lambda_X\to \lambda^\dagger$. Otherwise, we can replace $\lambda_X$ by $1$.
\end{proof}

We would  like to use Lemma \ref{lem:lambda} to lower bound $\Expt(a+b)^2$. To that end, define the zero mean random variables $\bar{a} = a-\tfrac{1}{2}$ and $\bar{b} = b-\tfrac{1}{2}$, and note that $\bar{a}$ must satisfy $h(\bar{a}+\frac{1}{2}) \geq r_1$. In order to apply the lemma we first need to upper bound $\Expt\bar{a}^2$ under this latter restriction.
\begin{lemma}\label{lem:h_var}
Let $X$ be a zero mean random variable over $[-\tfrac{1}{2},\tfrac{1}{2}]$ satisfying $\Expt h(X+\tfrac{1}{2})\geq \rho$. Then $\Expt X^2 \leq (\tfrac{1}{2}-h^{-1}(\rho))^2$, and this bound is tight.
\end{lemma}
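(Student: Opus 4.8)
## Proof Plan for Lemma \ref{lem:h_var}

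The plan is to prove this by a concavity/extremality argument: among all zero-mean random variables $X$ supported on $[-\tfrac12,\tfrac12]$ with $\Expt h(X+\tfrac12)\geq\rho$, the second moment $\Expt X^2$ is maximized by a symmetric two-point distribution. First I would note that by the symmetry of $h(\cdot)$ around $\tfrac12$, replacing $X$ by the symmetrized variable that equals $X$ with probability $\tfrac12$ and $-X$ with probability $\tfrac12$ leaves both $\Expt X$, $\Expt X^2$ and $\Expt h(X+\tfrac12)$ unchanged; so we may assume $X$ is symmetric about $0$. Writing $Y=X^2\in[0,\tfrac14]$, the entropy constraint becomes $\Expt\,\phi(Y)\geq\rho$ where $\phi(y)=h(\tfrac12+\sqrt{y})$, and the objective is $\Expt Y$.

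The key observation is that $\phi$ is \emph{concave} on $[0,\tfrac14]$ (indeed $\sqrt{y}\mapsto h(\tfrac12+\sqrt y)$ composed appropriately — this is the same kind of elementary calculus check used for $G$ in the preceding proposition, and in fact $\phi(y)=G(y)-y$ in the paper's notation, so concavity of $\phi$ is immediate from concavity of $G$). By Jensen's inequality, $\rho\leq\Expt\phi(Y)\leq\phi(\Expt Y)$. Since $\phi$ is decreasing on $[0,\tfrac14]$ (again inherited from $G$ being decreasing), $\phi(\Expt Y)\geq\rho$ forces $\Expt Y\leq\phi^{-1}(\rho)$. Now $\phi^{-1}(\rho)$ is exactly the value $y$ with $h(\tfrac12+\sqrt y)=\rho$, i.e. $\sqrt y=\tfrac12-h^{-1}(\rho)$ (using that $h^{-1}$ is the branch in $[0,\tfrac12]$ and $h$ is symmetric), so $\Expt X^2=\Expt Y\leq(\tfrac12-h^{-1}(\rho))^2$, which is the claimed bound.

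For tightness, I would exhibit the extremal distribution directly: take $X$ to equal $\pm(\tfrac12-h^{-1}(\rho))$ each with probability $\tfrac12$. Then $\Expt X=0$, $X+\tfrac12\in\{h^{-1}(\rho),\,1-h^{-1}(\rho)\}$ so $h(X+\tfrac12)\equiv h(h^{-1}(\rho))=\rho$ almost surely, hence the constraint holds with equality, and $\Expt X^2=(\tfrac12-h^{-1}(\rho))^2$, matching the bound.

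The only real content is verifying that $\phi(y)=h(\tfrac12+\sqrt y)$ is concave and decreasing on $[0,\tfrac14]$; I expect this to be the main (though routine) obstacle, and it is essentially already done in the excerpt since $\phi(y)=G(y)-y$ and $G$ has just been shown concave and monotone decreasing on $[0,\tfrac14]$ — subtracting the linear term $y$ preserves both properties, so no new calculation is needed. Everything else is a one-line Jensen argument plus the explicit two-point construction for tightness.
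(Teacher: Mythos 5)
Your proof is correct and is essentially the paper's argument: the paper defines $Q(y)=h(\tfrac12-\sqrt y)$ (equal to your $\phi$ by the symmetry of $h$), applies Jensen via its concavity on $[0,\tfrac14]$, inverts by monotonicity, and exhibits the same symmetric two-point distribution for tightness. Your initial symmetrization of $X$ is harmless but unnecessary, since $h(X+\tfrac12)=\phi(X^2)$ already holds pointwise by the symmetry of $h$ about $\tfrac12$.
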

\begin{proof}
  Define $Q(y) = h(\tfrac{1}{2} - \sqrt{y})$. It is easily verified that $Q(y)$ is concave over $[0,\tfrac{1}{4}]$. Then
  \begin{align*}
    \rho\leq \Expt h\left(\frac{1}{2} - X\right) = \Expt Q(X^2) \leq Q(\Expt X^2) = h\left(\frac{1}{2} - \sqrt{\Expt X^2}\right).
  \end{align*}
Using the monotonicity of $h^{-1}$ we get $\Expt X^2  \leq (\tfrac{1}{2} - h^{-1}(\rho))^2$. This bound is attained with equality by $X$ uniformly distributed over $\{\tfrac{1}{2} - h^{-1}(\rho), h^{-1}(\rho) - \tfrac{1}{2}\}$.
\end{proof}
Taking $\m{A}$ in Lemma \ref{lem:lambda} as the family of all random variables $\bar{a}$ distributed over $[-\tfrac{1}{2},\tfrac{1}{2}]$ with $h(\bar{a}+\tfrac{1}{2})\geq r_1$, and noting that $\Expt \bar{a}\bar{b} = \frac{1}{4}-\frac{1}{2}\eta\geq 0$, we can use Lemma \ref{lem:h_var} to express the associated $\lambda^*$ as
\begin{align*}
  \lambda^* &= \max\left\{\frac{\frac{1}{4}-\frac{1}{2}\eta}{\max_{\bar{a}\in\m{A}}\Expt \bar{a}^2},1\right\}  = \max\left\{\frac{\frac{1}{4}-\frac{1}{2}\eta}{(\tfrac{1}{2}-h^{-1}(r_1))^2},1\right\}\\
& = \max\left\{\frac{\frac{1}{2}-\eta}{\tfrac{1}{2}-h^{-1}(r_1)\star h^{-1}(r_1)},1\right\}
\end{align*}
and hence if $h^{-1}(r_1)\star h^{-1}(r_1) > \eta$ then
\begin{align*}
  \Expt(a+b)^2  &= 1+\Expt(\bar{a}+\bar{b})^2 \geq 1+ \frac{(1+\lambda^*)^2}{2\lambda^*}\left(\frac{1}{2}-\eta\right)\\
& = 1+ \frac{(1-\eta-h^{-1}(r_1)\star h^{-1}(r_1))^2}{1-2 (h^{-1}(r_1)\star h^{-1}(r_1))}
\end{align*}
and otherwise $\Expt(a+b)^2\geq 1+4\left(\tfrac{1}{4} - \tfrac{1}{2}\eta\right)$. Combining this with~\eqref{eq:condEnt},~\eqref{eq:G} and ~\eqref{eq:theta2} we obtain
\begin{align}\label{eq:entropy_bound}
H(X_1+X_2|U)& \leq -\frac{1}{2} + 2G\left(\frac{1}{4}(\Expt(a+b)^2-1)\right) \leq J(h^{-1}(r_1),\eta).
\end{align}

\begin{remark}
The above bound can be attained whenever $\eta\geq h^{-1}(r_1)\star h^{-1}(r_1)$. To show this, we specify a distribution that satisfies~\eqref{eq:wyner_const} (and therefore also $H(X_1+X_2) = h(\eta) + 1-\eta$), and satisfies the bound~\eqref{eq:entropy_bound} with equality. Let $p^*\leq \tfrac{1}{2}$ be such that $p^*\star p^* = \eta$, i.e., $p^* =  \frac{1}{2}(1-\sqrt{1-2\eta })$, and consider the following distribution:
\begin{align}\label{eq:opt_dist}
\nonumber &X_1 = U \oplus Z_1, \; X_2 = U \oplus Z_2 \\
&U\sim \textrm{Bern}\left(\frac{1}{2}\right),\; Z_1\sim\textrm{Bern}(p^*), \; Z_2\sim\textrm{Bern}(p^*)
\end{align}
where $U,Z_1,Z_2$ are mutually independent. Note that $X_2 = X_1 \oplus Z$, where $Z=(Z_1\oplus Z_2) \sim \textrm{Bern}(\eta)$. Hence, $\Expt X_1=\Expt X_2 = \tfrac{1}{2}$ and $\Pr(X_1=0,X_2=0) = \tfrac{1}{2}(1-\eta)$. Furthermore,
\begin{align*}
  H(X_1+X_2|U) &= \frac{1}{2}H(Z_1+Z_2) + \frac{1}{2}H(2-(Z_1+Z_2)) = H(Z_1+Z_2) \\
&= H(Z_1+Z_2,Z_1) - H(Z_1|Z_1+Z_2) = H(Z_1,Z_2) - H(Z_1|Z_1+Z_2) \\
& = 2h(p^*) - \eta\cdot H(Z_1|Z_1+Z_2=1) \\
&= 2h\left(\frac{1}{2}\left(1-\sqrt{1-2\eta}\right)\right)-\eta
\end{align*}
Since $\eta = p^*\star p^* \geq h^{-1}(r_1)\star h^{-1}(r_1)$, we also have that $H(X_1|U)=h(p^*) \geq r_1$. Therefore this distribution indeed satisfies the constraints. For $\eta<h^{-1}(r_1)\star h^{-1}(r_1)$, it is believed that the bound~\eqref{eq:entropy_bound} is not tight, due to the inequality in~\eqref{eq:F1}.
\end{remark}

Finally, we show that the constraints~\eqref{eq:wyner_const} cannot be satisfied if $\eta<h^{-1}(r_1)$.
\begin{lemma}\label{lem:CSI}
Let $X$ and $Y$ be two zero mean random variables on $[-\tfrac{1}{2},\tfrac{1}{2}]$. If $\Expt h(Y+\tfrac{1}{2}) \geq \rho$ then $\Expt XY \leq \tfrac{1}{2}(\tfrac{1}{2}-h^{-1}(\rho))$.
\end{lemma}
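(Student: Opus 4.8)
The plan is to mimic the argument used for Lemma~\ref{lem:h_var}, replacing the ``squaring'' convexity trick with a direct pointwise bound on the product $XY$. The key observation is that for a zero-mean $Y$ on $[-\tfrac12,\tfrac12]$, the constraint $\Expt h(Y+\tfrac12)\geq \rho$ forces $Y$ to be ``spread out'' on average, and since $X$ is bounded by $\tfrac12$ in absolute value, the contribution $XY$ can be controlled by $\tfrac12\Expt|Y|$ together with a concavity bound relating $\Expt|Y|$ to $\Expt h(Y+\tfrac12)$.

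Concretely, first I would write $\Expt XY \leq \Expt |X|\,|Y| \leq \tfrac12 \Expt|Y|$, using $|X|\leq\tfrac12$ pointwise and the independence-free Cauchy–Schwarz-type bound $\Expt XY \le \Expt|XY|$. It then remains to show $\Expt|Y| \leq \tfrac12 - h^{-1}(\rho)$. For this, define $R(y) = h(\tfrac12 - y)$ on $[0,\tfrac12]$ and extend it evenly; one checks by standard analysis that $y\mapsto h(\tfrac12-|y|)$ is concave on $[-\tfrac12,\tfrac12]$ (its second derivative is that of $h$, which is negative). Hence by Jensen, $\rho \leq \Expt h(\tfrac12-|Y|) \leq h(\tfrac12 - \Expt|Y|)$, and applying the monotone inverse $h^{-1}$ (valid since $\tfrac12 - \Expt|Y|\in[0,\tfrac12]$) yields $\Expt|Y| \leq \tfrac12 - h^{-1}(\rho)$. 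Combining the two displays gives the claim. I would also note, for completeness, that equality holds when $X$ and $Y$ are perfectly aligned and $Y$ is two-valued at $\pm(\tfrac12 - h^{-1}(\rho))$, as in the tight case of Lemma~\ref{lem:h_var}.

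The only subtlety — and the step I'd flag as the main obstacle — is the concavity of $y\mapsto h(\tfrac12 - |y|)$ and, more importantly, the fact that I have \emph{not} used any joint structure between $X$ and $Y$: the bound $\Expt XY \le \tfrac12\Expt|Y|$ is actually tight only when $X$ is deterministically $\tfrac12\cdot\mathrm{sign}(Y)$, which need not lie in the admissible family. In the intended application $X=\bar a$ is constrained (via Lemma~\ref{lem:h_var}) and $Y=\bar b$; one should check that the bound $\tfrac12(\tfrac12 - h^{-1}(\rho))$ stated here — which is exactly $\lambda$-free — is what is needed downstream (it is used to rule out $\eta < h^{-1}(r_1)$, i.e.\ to show the covariance constraint $\Expt\bar a\bar b = \tfrac14 - \tfrac12\eta$ cannot exceed $\tfrac12(\tfrac12 - h^{-1}(r_1))$ when both marginal entropy constraints hold, so that $\tfrac14 - \tfrac12\eta \le \tfrac12(\tfrac12 - h^{-1}(r_1))$ forces $\eta\ge h^{-1}(r_1)$). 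So the proof itself is short; the work is entirely in the one-line concavity verification and in confirming the constant matches what the surrounding argument requires.
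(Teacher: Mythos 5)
Your proof is correct, but it takes a genuinely different route from the paper's. The paper applies the Cauchy--Schwarz inequality, $(\Expt XY)^2\leq \Expt X^2\,\Expt Y^2$, and then bounds $\Expt X^2\leq\tfrac14$ and, via Lemma \ref{lem:h_var} (whose proof rests on the concavity of $y\mapsto h(\tfrac12-\sqrt{y})$), $\Expt Y^2\leq(\tfrac12-h^{-1}(\rho))^2$. You instead use the pointwise bound $XY\leq\tfrac12|Y|$ and control the \emph{first} moment of $|Y|$: since $h(Y+\tfrac12)=h(\tfrac12-|Y|)$ by the symmetry of $h$ about $\tfrac12$, and $t\mapsto h(\tfrac12-t)$ is concave on $[0,\tfrac12]$ as an affine reparametrization of $h$, Jensen gives $\rho\leq h(\tfrac12-\Expt|Y|)$ and hence $\Expt|Y|\leq\tfrac12-h^{-1}(\rho)$. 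Both routes land on exactly the constant $\tfrac12(\tfrac12-h^{-1}(\rho))$; yours is slightly more elementary, bypassing Cauchy--Schwarz and Lemma \ref{lem:h_var} and requiring only the trivial concavity of an affine composition of $h$ rather than of $h(\tfrac12-\sqrt{y})$. The step you flagged is not an obstacle: you only need concavity of $t\mapsto h(\tfrac12-t)$ on $[0,\tfrac12]$ applied to $t=|Y|$ (the concavity of $y\mapsto h(\tfrac12-|y|)$ on all of $[-\tfrac12,\tfrac12]$ also holds, since $h'(\tfrac12)=0$, but is not needed). Your other worry is likewise moot: neither proof uses any joint structure of $(X,Y)$ or the zero-mean hypothesis, and the constant is precisely what the surrounding argument requires, namely $\tfrac14-\tfrac12\eta=\Expt\bar a\bar b\leq\tfrac12(\tfrac12-h^{-1}(r_1))$, which forces $\eta\geq h^{-1}(r_1)$.
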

\begin{proof}
Clearly $\Expt X^2 \leq \tfrac{1}{4}$, and by Lemma \ref{lem:h_var} also $\Expt Y^2 \leq (\tfrac{1}{2}-h^{-1}(\rho))^2$. Using the Cauchy-Schwarz Inequality we have
\begin{align*}
(\Expt XY)^2 \leq \Expt X^2 \Expt Y^2 \leq \frac{1}{4}\left(\frac{1}{2}-h^{-1}(\rho)\right)^2
\end{align*}
\end{proof}
Using $\bar{a},\bar{b}$ in the above Lemma and recalling that $\Expt \bar{a}\bar{b} = \tfrac{1}{4}-\tfrac{1}{2}\eta$ and that $\Expt h(\bar{a}+\tfrac{1}{2}) \geq r_1$, we indeed verify that $\eta\geq h^{-1}(r_1)$ must hold.

The proof of Lemma \ref{lem:sw} now follows since we have shown that for any admissible $(r_0,r_1,r_2)$, there must exist an $h^{-1}(r_1)\leq\eta\leq \tfrac{1}{2}$ such that $r_0+r_1+r_2\leq L(\eta)$ and $r_1+r_2\leq J(p,\eta)$.

\section{Discussion}
Given a pair of multiset-union-free families $\m{F}_1,\m{F}_2$ of subsets of $[n]$ with cardinalities $2^{nR_1}$ and $2^{nR_2}$ respectively, we have introduced a bounding technique based on a procedure for constructing a  multiset-union-free system $\m{U}$ over subsets of $[(1-\alpha) n]$, for $\alpha<1$. This was achieved by proving the existence of a subset $S\subset [n]$ of cardinality $\alpha n$, such that the multiset-union of the projection multisets of each family on $S$, i.e., $P^+_S(\m{F}_1)\uplus P^+_S(\m{F}_2)$ has a member $T$ with a large number of multiplicities, say $2^{n\rho}$. This in turn implied that $r_0+r_1+r_2$ for the system is at least $\rho/(1-\alpha)$. To lower bound $\rho$ as a function of $\alpha$ and the cardinalities of the original families, we introduced the soft Sauer-Perles-Shelah Lemma, which enabled us to bound the number of occurrences of the multiset $T=S$. This lemma offered the additional benefit of a lower bound on $r_1$. We note in passing that the bound obtained on $R_2$ as a function of $R_1$ outperforms previous results even without incorporating the constraint on $r_1$. We suspect that better bounds on $\rho$ can be obtained, possibly for $T$ other than $S$.

%
%
%

\end{document}